\documentclass[final,1p,number,11pt]{elsarticle}%,fleqn
\usepackage{amsmath,amssymb,amsthm}%mathptmx}%,mathrsfs}

\textwidth=455pt \evensidemargin=8pt \oddsidemargin=8pt
\marginparsep=8pt \marginparpush=8pt \textheight=640pt
\topmargin=-40pt
\parindent20pt

\newtheorem{theorem}{Theorem}[section]
\newtheorem{lemma}[theorem]{Lemma}
\newtheorem{corollary}[theorem]{Corollary}

\theoremstyle{definition}

\newtheorem{remark}[theorem]{Remark}

\DeclareMathOperator{\RE}{Re} 
\newcommand{\UCV}{\mathcal{UCV}}
\newcommand{\TS}{\mathcal{TS}^*}
\newcommand{\TC}{\mathcal{TC}}
\begin{document}

\begin{frontmatter}
\title{Coefficient Inequalities for Starlikeness and Convexity  }

\author[rma]{Rosihan M. Ali}
\address[rma]{School of Mathematical Sciences, Universiti Sains
Malaysia, 11800 Penang, Malaysia} \ead{rosihan@cs.usm.my}

\author[rma]{Mahnaz M. Nargesi}
 \ead{moradinargesik@yahoo.com}

\author[rma,vr]{V. Ravichandran}
\address[vr]{Department of Mathematics,
University of Delhi, Delhi 110 007, India}
\ead{vravi@maths.du.ac.in}

\begin{abstract}
For an analytic function $f(z)=z+\sum_{n=2}^\infty a_n z^n$
satisfying the inequality $\sum_{n=2}^\infty n(n-1)|a_n|\leq \beta$,
the range of $\beta$ is determined  so that the function $f$ is
either starlike or convex of order $\alpha$. Several related
problems are also investigated. Applications of these results to
Gaussian hypergeometric functions are also provided.
\end{abstract}

\begin{keyword}Convexity, starlikeness, uniform convexity, parabolic
starlikeness, coefficient inequality, hypergeometric functions.

\MSC[2010] 30C45, 30C80
\end{keyword}

\end{frontmatter}

\section{Introduction}\noindent
Let $\mathcal{A}$ be the class of analytic functions  in $\mathbb{D}
=\{z\in\mathbb{C}: |z| <1\}$, normalized by $f(0)=0$ and $f'(0)=1$.
A function $f \in \mathcal{A}$ has Taylor's series expansion of the
form \begin{equation}\label{f(z)} f(z)=z+\sum_{n=2}^\infty a_n
z^n.\end{equation} Let $\mathcal{S}$ be the subclass of
$\mathcal{A}$ consisting of univalent functions. For $ 0\leq
\alpha<1$, let $\mathcal{S}^* (\alpha)$ and $\mathcal{C}(\alpha)$ be
subclasses of $\mathcal{S}$ consisting of starlike functions of
order $\alpha$ and convex functions of order $\alpha$, respectively
defined analytically by the following equalities:
\[\mathcal{S}^*(\alpha):=\left\{ f\in\mathcal{S}: \RE \left( \frac{zf'(z)}{f(z)} \right)
> \alpha\right\}, \
\text{ and } \ \mathcal{C}(\alpha):=\left\{f\in\mathcal{S}: \RE
\left(1+ \frac{zf''(z)}{f'(z)} \right) >\alpha \right\}.
\] The classes $\mathcal{S}^*:=\mathcal{S}^*(0)$ and
$\mathcal{C}:=\mathcal{C}(0)$ are the familiar classes of starlike
and convex functions respectively. Closely related are the following
classes of functions:
\[\mathcal{S}^*_\alpha  :=\left\{ f\in\mathcal{S}:   \left| \frac{zf'(z)}{f(z)} -1
\right| < 1- \alpha\right\}, \quad \text{ and } \quad
 \mathcal{C}_\alpha :=\left\{ f\in\mathcal{S}:  \left|\frac{zf''(z)}{f'(z)}
\right| < 1-\alpha\right\} .
\] Note that $ \mathcal{S}^*_\alpha \subseteq\mathcal{S}^*(\alpha)$
and $\mathcal{C}_\alpha\subseteq \mathcal{C}(\alpha)$. For
$\beta<1$, $\alpha\in \mathbb{R}$, a function  $f\in\mathcal{A}$
belongs to the class $\mathcal{R}(\alpha, \beta)$ if  the function
$f$ satisfies the following inequality
\begin{equation}\label{R} \RE
\left(\frac{zf'(z)}{f(z)}\left(\alpha\frac{zf''(z)}{f'(z)}+1
\right)\right)>\beta.\end{equation} Clearly,
$\mathcal{R}(0,\beta)=\mathcal{S}^*(\beta)$. For $\beta\geq
-\alpha/2$,  Li and Owa \cite{owa2} proved that
$\mathcal{R}(\alpha,\beta)\subset \mathcal{S}^*$.

A function $f\in \mathcal{S}$ is \emph{$k$-uniformly convex,}
$(0\leq k<\infty)$, if $f$ maps every circular arc $\gamma$
contained in $\mathbb{D}$ with center $\zeta$, $|\zeta|\leq k$, onto
a convex arc. The class of all $k$-uniformly convex functions is
denoted by $k-\UCV$. Goodman \cite{goodman} introduced the class
$\UCV:= 1-\UCV$ while the class $k-\UCV$ was introduced by    Kanas\
and Wisniowska \cite{kanas}. They  \cite[Theorem 2.2, p.\
329]{kanas} (see \cite{ravi} for details) have shown that $f\in
k-\UCV$ if and only if the function $f$ satisfies the following
inequality:
\[ k\left|\frac{zf''(z)}{f'(z)} \right| <
\RE \left(1+ \frac{zf''(z)}{f'(z)}\right). \] By making use of this
result, the following sufficient condition for a function to be
$k$-uniformly convex was proved in \cite{kanas}:

\begin{theorem}[{\cite[Theorem 3.3, p.\ 334]{kanas}}]\label{T1}
If the function $f(z)=z+\sum_{n=2}^\infty a_n z^n$ satisfies the
inequality  $\sum_{n=2}^\infty n(n-1)|a_n| \leq 1/(k+2)$, $(0\leq
k<\infty)$, then $f\in k-\UCV$. The bound $1/(k+2)$ cannot be
replaced by a larger number.
\end{theorem}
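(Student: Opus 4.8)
The plan is to invoke the Kanas--Wisniowska characterization recalled above, so it is enough to verify that
\[
k\left|\frac{zf''(z)}{f'(z)}\right| < \RE\left(1 + \frac{zf''(z)}{f'(z)}\right), \qquad z \in \mathbb{D}.
\]
Abbreviating $w = w(z) := zf''(z)/f'(z)$ and using $\RE(1+w) = 1 + \RE w \geq 1 - |w|$, I would reduce the whole statement to the single inequality $(k+1)|w(z)| < 1$ on $\mathbb{D}$: this gives $k|w| < 1 - |w| \leq \RE(1+w)$. The reduction wastes nothing for the purpose of the sharp constant, because equality in $\RE(1+w) \geq 1 - |w|$ holds exactly when $w$ is a negative real number, and that is precisely the configuration realized by the extremal function in the sharpness part.

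Next I would estimate $w$ through the Taylor coefficients. Since $zf''(z) = \sum_{n=2}^\infty n(n-1)a_n z^{n-1}$ and $f'(z) = 1 + \sum_{n=2}^\infty n a_n z^{n-1}$, for $z \in \mathbb{D}$ one has
\[
|w(z)| \leq \frac{\sum_{n=2}^\infty n(n-1)|a_n|\,|z|^{n-1}}{1 - \sum_{n=2}^\infty n|a_n|\,|z|^{n-1}},
\]
the denominator being positive because $\sum_{n=2}^\infty n|a_n| \leq \sum_{n=2}^\infty n(n-1)|a_n| \leq 1/(k+2) < 1$; here I use the elementary inequality $n \leq n(n-1)$, valid for every $n \geq 2$, which is the load-bearing estimate. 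Consequently $(k+1)|w(z)| < 1$ follows once
\[
\sum_{n=2}^\infty \bigl[(k+1)n(n-1) + n\bigr]|a_n|\,|z|^{n-1} < 1,
\]
and since $(k+1)n(n-1) + n \leq (k+2)n(n-1)$ for every $n \geq 2$ (again just $n \leq n(n-1)$) and $|z|^{n-1} < 1$, the left-hand side is at most $(k+2)\sum_{n=2}^\infty n(n-1)|a_n| \leq 1$, with strict inequality unless all $a_n$ vanish, in which case $f(z) = z$ trivially belongs to $k-\UCV$. This settles the sufficiency.

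For sharpness I would show that no number exceeding $1/(k+2)$ works. Fix $\beta$ with $1/(k+2) < \beta < 1$ and take $f(z) = z + \tfrac{\beta}{2} z^2$, so that $\sum_{n=2}^\infty n(n-1)|a_n| = \beta$. Here $w(z) = \beta z/(1+\beta z)$; letting $z \to -1$ along the negative real axis, $w(z) \to -\beta/(1-\beta)$, a negative real number, and
\[
k|w(z)| - \RE\bigl(1 + w(z)\bigr) \longrightarrow \frac{(k+2)\beta - 1}{1-\beta} > 0 .
\]
Hence the characterizing inequality fails at points of $\mathbb{D}$ sufficiently close to $-1$, so $f \notin k-\UCV$; since $\beta$ may be chosen arbitrarily close to $1/(k+2)$, the bound $1/(k+2)$ cannot be replaced by a larger number.

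The coefficient estimates in the middle step are entirely routine; the one genuinely essential ingredient is the inequality $n \leq n(n-1)$ for $n \geq 2$, which simultaneously keeps $f'$ zero-free and calibrates the admissible bound to exactly $1/(k+2)$. The points that need care are preserving the strictness of the inequalities (using $|z| < 1$), as the characterization requires a strict inequality, and ensuring that the lossy step $\RE(1+w) \geq 1 - |w|$ is saturated by the extremal configuration $w \to$ negative real, so that the constant obtained is genuinely sharp.
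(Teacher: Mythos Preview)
Your proof is correct and takes essentially the same route as the paper, which derives this result as a corollary of Theorem~\ref{th1}: both arguments establish $|zf''(z)/f'(z)| < 1/(k+1)$ from the coefficient hypothesis and then apply the chain $k|w| < 1 - |w| \leq 1 + \RE w$. The paper packages the first step through the class $\mathcal{C}_{k/(k+1)}$ and Silverman's Theorem~\ref{suffi}, whereas you carry out the identical coefficient estimate directly; your sharpness argument is also more explicit than the paper's, which merely exhibits the extremal quadratic.
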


Goodman \cite[Theorem 6]{goodman}  proved the result in the case
$k=1$ for functions to be uniformly convex. Theorem~\ref{T1} in the
special case $k=0$ shows that the corresponding constant is 1/2 for
functions $f$ to be convex. A function $f\in \mathcal{A}$ is
\emph{parabolic starlike of order $\alpha$} \cite{ali1} if
\[\left| \frac{zf'(z)}{f(z)} -1 \right| < 1-2\alpha +
\RE\left(\frac{zf'(z)}{f(z)} \right).\]

\begin{theorem}[Ali {\cite[Theorem 3.1, p.\ 564]{ali1}}]\label{T2}
If the function  $f(z)=z+\sum_{n=2}^\infty a_n z^n$ satisfies the
inequality $\sum_{n=2}^{\infty}(n-1)|a_n|\leq
(1-\alpha)/(2-\alpha)$, then   the function $f$ is parabolic
starlike of order $\alpha$. The bound $(1-\alpha)/(2-\alpha)$ cannot
be replaced by a larger number.
\end{theorem}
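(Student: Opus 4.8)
The plan is to follow the sufficiency half of Theorem~\ref{T1}: rewrite the two‑sided inequality defining parabolic starlikeness of order $\alpha$ as a one‑sided estimate, reduce that to a disk condition on $zf'(z)/f(z)$, and then bound the Taylor tails using the hypothesis. Set $w(z) = zf'(z)/f(z)$. Since $\RE w(z) = 1 + \RE\bigl(w(z)-1\bigr)$, the condition $|w(z)-1| < 1-2\alpha + \RE w(z)$ is equivalent to
\[
|w(z)-1| - \RE\bigl(w(z)-1\bigr) < 2(1-\alpha),
\]
whose left‑hand side is at most $2|w(z)-1|$ because $-\RE\zeta \le |\zeta|$ for every $\zeta\in\mathbb C$. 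Hence it is enough to establish the disk estimate $|w(z)-1| < 1-\alpha$ for all $z\in\mathbb D$ — incidentally a conclusion slightly stronger than what is claimed.

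To get that estimate, write, with $r=|z|<1$,
\[
w(z)-1 = \frac{zf'(z)-f(z)}{f(z)} = \frac{\sum_{n=2}^\infty (n-1)a_n z^{n-1}}{1+\sum_{n=2}^\infty a_n z^{n-1}}.
\]
The hypothesis forces $\sum_{n=2}^\infty |a_n| \le \sum_{n=2}^\infty (n-1)|a_n| \le (1-\alpha)/(2-\alpha) < 1$, so the denominator stays bounded away from $0$, and the triangle inequality gives $|w(z)-1| \le \frac{\sum_{n=2}^\infty (n-1)|a_n| r^{n-1}}{1-\sum_{n=2}^\infty |a_n| r^{n-1}}$. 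A short rearrangement shows this bound is $<1-\alpha$ exactly when $\sum_{n=2}^\infty (n-\alpha)|a_n| r^{n-1} < 1-\alpha$. Since $n-\alpha = (n-1)+(1-\alpha) \le (2-\alpha)(n-1)$ for every $n\ge 2$ (the gap being $(1-\alpha)(n-2)\ge 0$), the hypothesis yields $\sum_{n=2}^\infty (n-\alpha)|a_n| \le (2-\alpha)\sum_{n=2}^\infty (n-1)|a_n| \le 1-\alpha$, and because $r^{n-1}<1$ the inequality is strict (it is trivial when $f(z)\equiv z$). This proves the sufficiency.

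For sharpness I would take $f_\varepsilon(z) = z + b z^2$ with $b = (1-\alpha)/(2-\alpha)+\varepsilon$, $\varepsilon>0$. On the real axis, $zf_\varepsilon'(z)/f_\varepsilon(z) = (1+2bz)/(1+bz) \to (1-2b)/(1-b)$ as $z\to -1$; when $\varepsilon=0$ this limit equals $\alpha$, the vertex of the parabola $v^2 < 4(1-\alpha)(u-\alpha)$, which is precisely the region $|w-1|<1-2\alpha+\RE w$. Since $b\mapsto(1-2b)/(1-b)$ is strictly decreasing, for $\varepsilon>0$ the limiting value is real and $<\alpha$, hence outside that parabola; by continuity $zf_\varepsilon'(z)/f_\varepsilon(z)$ leaves the parabola for $z$ real near $-1$, so $f_\varepsilon$ is not parabolic starlike of order $\alpha$. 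Thus the bound $(1-\alpha)/(2-\alpha)$ cannot be increased.

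The argument has no genuine obstacle; its entire content is the reduction to the disk condition $|zf'(z)/f(z)-1|<1-\alpha$ together with the elementary inequality $n-\alpha\le(2-\alpha)(n-1)$, valid exactly for $n\ge 2$, which is what forces the constant to be $(1-\alpha)/(2-\alpha)$. The only points that require a little care are keeping every estimate strict throughout $\mathbb D$ and, for the "cannot be replaced" assertion, checking that the exhibited extremal function really escapes the parabolic region.
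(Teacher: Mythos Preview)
Your argument is correct and follows exactly the route the paper indicates in the Remark after Corollary~\ref{cor2}: reduce parabolic starlikeness of order $\alpha$ to the disk condition $|zf'(z)/f(z)-1|<1-\alpha$ (this is the cited sufficient condition of Ali), and then verify that disk condition via the coefficient inequality $n-\alpha\le(2-\alpha)(n-1)$, which is precisely Corollary~\ref{cor2}. The only difference is that you unpack both citations and supply a direct sharpness computation, making the proof self-contained; the paper defers those steps to \cite{ali1} and \cite{silver}.
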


Motivated by Theorems~\ref{T1} and \ref{T2}, the range of $\beta$ is
determined for the analytic function $f(z)=z+\sum_{n=2}^\infty a_n
z^n$ satisfying the inequality $\sum_{n=2}^\infty n(n-1)|a_n|\leq
\beta$ to be  either starlike or convex of order $\alpha$. Similar
problems were investigated for functions satisfying certain other
coefficient inequality. The reverse implications are investigated
for analytic functions with negative coefficients. Finally,
applications of these results to hypergeometric functions are also
provided.

The following theorem that gives necessary and  sufficient
conditions for functions to belong to certain subclasses of starlike
and  convex functions will be need in the sequel.

\begin{theorem}[{\cite[Theorem 2, p.\ 961]{merkes}}, and
{\cite[Theorem 1 and Corollary, p.\ 110]{silver}}] \label{suffi}
\begin{enumerate}\item[]
\item \label{suffi1a}  If the function $f(z)=z+\sum_{n=2}^\infty a_n
z^n$   satisfies the inequality
\begin{equation} \label{suffi1}
\sum_{n=2}^{\infty}(n-\alpha)|a_n|\leq 1-\alpha, \end{equation} then
$f\in \mathcal{S}^*_\alpha$. If $a_n\leq 0$, then the condition
\eqref{suffi1} is a necessary condition for $f\in
\mathcal{S}^*(\alpha)$.

\item    \label{suffi1b} Similarly, if the function $f$   satisfies the inequality
\begin{equation} \label{suffi2} \sum_{n=2}^{\infty}n(n-\alpha)|a_n|\leq
1-\alpha,\end{equation}  then $f\in \mathcal{C}_\alpha$. If $a_n\leq
0$, then the condition \eqref{suffi2} is a necessary condition for
$f\in \mathcal{C}(\alpha)$.
\end{enumerate}
\end{theorem}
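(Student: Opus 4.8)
The plan is to establish the four assertions of the theorem by reducing the convex statements (\ref{suffi1b}) to the starlike statements (\ref{suffi1a}) via the Alexander-type substitution $g(z)=zf'(z)$, and to prove the starlike statements directly: sufficiency by a crude triangle-inequality estimate on $\mathbb{D}$, and necessity by evaluating along the positive real axis and letting the radius tend to $1$.

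\emph{Sufficiency for $\mathcal{S}^*_\alpha$.} Assume \eqref{suffi1}. First I would note that $(2-\alpha)\sum_{n\ge 2}|a_n|\le \sum_{n\ge 2}(n-\alpha)|a_n|\le 1-\alpha$, so $\sum_{n\ge 2}|a_n|\le (1-\alpha)/(2-\alpha)<1$ and hence $f(z)/z=1+\sum_{n\ge 2}a_n z^{n-1}$ never vanishes on $\mathbb{D}$. For $z\in\mathbb{D}$,
\[
\frac{zf'(z)}{f(z)}-1=\frac{\sum_{n=2}^\infty (n-1)a_n z^{n-1}}{1+\sum_{n=2}^\infty a_n z^{n-1}},
\]
and estimating the numerator above by $\sum_{n\ge 2}(n-1)|a_n||z|^{n-1}$ and the denominator below by $1-\sum_{n\ge 2}|a_n||z|^{n-1}>0$, one sees that $|zf'(z)/f(z)-1|<1-\alpha$ will follow as soon as
\[
\sum_{n=2}^\infty (n-1)|a_n||z|^{n-1}<(1-\alpha)\Bigl(1-\sum_{n=2}^\infty |a_n||z|^{n-1}\Bigr),
\]
which rearranges to $\sum_{n=2}^\infty(n-\alpha)|a_n||z|^{n-1}<1-\alpha$; since $|z|<1$ this is immediate from \eqref{suffi1} (the inequality being strict unless $f(z)\equiv z$, a trivial case). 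Finally $|zf'(z)/f(z)-1|<1-\alpha$ forces $\RE(zf'(z)/f(z))>\alpha\ge 0$, so $f$ is starlike, hence univalent, and $f\in\mathcal{S}^*_\alpha$.

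\emph{Necessity for $\mathcal{S}^*(\alpha)$ when $a_n\le 0$.} Suppose $f\in\mathcal{S}^*(\alpha)$ with $a_n\le 0$ for $n\ge 2$. Restricting to $z=r\in(0,1)$ the quotient $zf'(z)/f(z)$ is real; since $f$ is univalent with $f(r)/r\to 1$ as $r\to 0^+$, the denominator $f(r)/r=1+\sum_{n\ge2}a_n r^{n-1}$ stays positive on $(0,1)$. Clearing this positive denominator in $\RE(rf'(r)/f(r))>\alpha$ and using $|a_n|=-a_n$ gives
\[
\sum_{n=2}^\infty (n-\alpha)|a_n| r^{n-1}<1-\alpha \qquad (0<r<1),
\]
and letting $r\to 1^-$, with monotone convergence applied to the nonnegative terms, yields \eqref{suffi1}.

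\emph{The convex statements and the main obstacle.} For $g(z):=zf'(z)=z+\sum_{n=2}^\infty n a_n z^n$ a direct computation gives $zg'(z)/g(z)=1+zf''(z)/f'(z)$, so that $f\in\mathcal{C}_\alpha\iff g\in\mathcal{S}^*_\alpha$ and $f\in\mathcal{C}(\alpha)\iff g\in\mathcal{S}^*(\alpha)$, while the coefficients $na_n$ of $g$ are $\le 0$ exactly when $a_n\le 0$. Applying the starlike assertions just proved to $g$ and replacing each $|na_n|$ by $n|a_n|$ turns \eqref{suffi1} into \eqref{suffi2} and turns the hypothesis on $g$ into the corresponding hypothesis on $f$, which gives both halves of (\ref{suffi1b}). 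None of the estimates is deep; the points that need care are the non-vanishing and positivity of the denominators $f(z)/z$ and $f(r)/r$ (so inequalities can be cleared without reversing), the distinction between the strict inequality available on $\mathbb{D}$ for sufficiency and only the non-strict inequality surviving the limit $r\to 1^-$ for necessity, and checking that the substitution $g=zf'$ preserves both the normalization "$<1-\alpha$" and the sign pattern of the coefficients.
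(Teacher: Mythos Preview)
Your proof is correct. Note, however, that the paper does not supply its own proof of this theorem: it is stated as a cited result from Merkes--Robertson--Scott and Silverman, and the paragraph immediately following the statement simply records that part~(2) follows from part~(1) ``by the Alexandar's result'' --- exactly your reduction via $g(z)=zf'(z)$. Your direct argument for part~(1) (triangle-inequality estimate for sufficiency, radial evaluation and $r\to 1^-$ for necessity) is the standard Silverman-type proof that the paper is citing, so your approach is essentially the classical one the paper defers to.
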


The  necessary and  sufficient conditions in
Theorem~\ref{suffi}\eqref{suffi1a}  was proved by Merkes, Robertson,
and Scott \cite[Theorem 2, p.\ 961]{merkes} in 1962 and proved
independently by Silverman \cite[Theorem 1, p.\ 110]{silver} later
in 1975. The corresponding result  in
Theorem~\ref{suffi}\eqref{suffi1b} follows by the Alexandar's result
and it was proved in \cite[Corollary, p.\ 110]{silver}.

\section{Sufficient conditions for starlikeness and
convexity}\noindent The following theorem provides sufficient
coefficient inequality for functions to be in the classes
$\mathcal{C}_\alpha$ or $\mathcal{S}^*_\alpha$.

\begin{theorem}\label{th1}
Let $\alpha\in[0,1)$. If the function   $f\in \mathcal{A}$ given by
\eqref{f(z)} satisfies the inequality
\begin{equation}\label{th2.1eq1}
 \sum_{n=2}^{\infty}n(n-1)|a_n|\leq \beta<1,
\end{equation}
then the following holds.
\begin{enumerate}
  \item[(1)] The function $f$ belongs to the class $\mathcal{C}_\alpha$ for $\beta\leq (1-\alpha)/(2-\alpha)$, and
the bound $(1-\alpha)/(2-\alpha)$ cannot be replaced by a larger
number.

  \item[(2)]  The function $f$ belongs to the class $ \mathcal{S}^*_\alpha$ for $\beta\leq
  2(1-\alpha)/(2-\alpha)$, and the bound $2(1-\alpha)/(2-\alpha)$ cannot be replaced by a
larger number.

\end{enumerate}
\end{theorem}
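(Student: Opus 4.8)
The plan is to derive both sufficiency statements from Theorem~\ref{suffi} by a term-by-term comparison of coefficient functionals, and to obtain the sharpness from the quadratic polynomial $f_\beta(z)=z-(\beta/2)z^2$, which exhausts the whole budget in \eqref{th2.1eq1}.

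For part (1), Theorem~\ref{suffi}\eqref{suffi1b} reduces the claim to the estimate $\sum_{n=2}^\infty n(n-\alpha)|a_n|\le 1-\alpha$. First I would write $n(n-\alpha)=\bigl(1+\tfrac{1-\alpha}{n-1}\bigr)\,n(n-1)$; since $1+\tfrac{1-\alpha}{n-1}$ is decreasing in $n$ and equals $2-\alpha$ at $n=2$, it follows that $\sum_{n=2}^\infty n(n-\alpha)|a_n|\le(2-\alpha)\sum_{n=2}^\infty n(n-1)|a_n|\le(2-\alpha)\beta$, and this is at most $1-\alpha$ precisely because $\beta\le(1-\alpha)/(2-\alpha)$. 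Part (2) is entirely analogous via Theorem~\ref{suffi}\eqref{suffi1a}: the partial-fraction identity $\frac{n-\alpha}{n(n-1)}=\frac{\alpha}{n}+\frac{1-\alpha}{n-1}$ shows that $\frac{n-\alpha}{n(n-1)}$ is decreasing in $n$ with value $(2-\alpha)/2$ at $n=2$, whence $\sum_{n=2}^\infty(n-\alpha)|a_n|\le\frac{2-\alpha}{2}\sum_{n=2}^\infty n(n-1)|a_n|\le\frac{2-\alpha}{2}\beta$, which is at most $1-\alpha$ exactly when $\beta\le 2(1-\alpha)/(2-\alpha)$.

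For sharpness I would test $f_\beta(z)=z-(\beta/2)z^2$. It satisfies \eqref{th2.1eq1} with equality, and $\RE f_\beta'(z)=\RE(1-\beta z)>1-\beta>0$, so $f_\beta$ is univalent. A direct computation gives $\frac{zf_\beta''(z)}{f_\beta'(z)}=\frac{-\beta z}{1-\beta z}$ and $\frac{zf_\beta'(z)}{f_\beta(z)}-1=\frac{-(\beta/2)z}{1-(\beta/2)z}$. Using $|1-cz|\ge 1-|c|\,|z|$ for $|c|<1$ together with the monotonicity of $t\mapsto t/(1-t)$, the suprema over $\mathbb{D}$ of $\bigl|\tfrac{zf_\beta''}{f_\beta'}\bigr|$ and $\bigl|\tfrac{zf_\beta'}{f_\beta}-1\bigr|$ are $\frac{\beta}{1-\beta}$ and $\frac{\beta}{2-\beta}$ respectively, each approached as $z\to 1^-$ but not attained. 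Solving $\frac{\beta}{1-\beta}=1-\alpha$ gives $\beta=(1-\alpha)/(2-\alpha)$ and solving $\frac{\beta}{2-\beta}=1-\alpha$ gives $\beta=2(1-\alpha)/(2-\alpha)$; hence for any $\beta$ exceeding the indicated threshold (with $\beta<1$) the univalent function $f_\beta$ fails the defining inequality of $\mathcal{C}_\alpha$, respectively of $\mathcal{S}^*_\alpha$, so neither constant can be enlarged.

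The monotonicity comparisons are routine; the one step needing genuine care is the sharpness argument, where one must evaluate the \emph{supremum over the entire disk}, not merely the boundary value, of the two fractional linear expressions, and observe that it is only approached in the boundary limit, so that the polynomial $f_\beta$ itself lies just outside the target class precisely when $\beta$ surpasses the critical value.
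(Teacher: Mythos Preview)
Your argument is correct and, for the sufficiency parts, is essentially the paper's own proof: both reduce to Theorem~\ref{suffi} via the elementary bound $n-\alpha\le(2-\alpha)(n-1)$ for $n\ge 2$ (you dress this up as monotonicity of $\tfrac{n-\alpha}{n-1}$ and the partial fraction $\tfrac{n-\alpha}{n(n-1)}=\tfrac{\alpha}{n}+\tfrac{1-\alpha}{n-1}$, but the content is identical). The only real difference is in the sharpness: the paper simply writes down the quadratic extremal $f_0(z)=z-\tfrac{1-\alpha}{2(2-\alpha)}z^2$ (respectively $z-\tfrac{1-\alpha}{2-\alpha}z^2$) and appeals implicitly to the necessary direction of Theorem~\ref{suffi} for negative coefficients, whereas you compute the suprema of $|zf''/f'|$ and $|zf'/f-1|$ directly for the family $f_\beta$ and show they exceed $1-\alpha$ once $\beta$ passes the threshold. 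Your treatment is more self-contained; the paper's is terser but relies on the reader knowing that Silverman's coefficient condition is also necessary for functions with negative coefficients.
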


\begin{proof} (1) Let the function $f$ satisfy the inequality
\eqref{th2.1eq1} with $\beta\leq (1-\alpha)/(2-\alpha)$. Then, the
Equation~\eqref{th2.1eq1} together with \eqref{f(z)}   shows that
\[|f'(z)-1|\leq \sum_{n=2}^{\infty}n |a_n|\leq
\sum_{n=2}^{\infty}n(n-1)|a_n|<1,\] and so $\RE f'(z)>0$. This shows
that  $f\in \mathcal{S}$. Since the inequality
\begin{equation}\label{th2.1eq2}
 n-\alpha \leq (2-\alpha)(n-1)
\end{equation}  holds  for
$n\geq 2$, the inequality \eqref{th2.1eq1} leads to
\[
\sum_{n=2}^{\infty}n(n-\alpha)|a_n|
\leq(2-\alpha)\sum_{n=2}^{\infty}n(n-1)|a_n| \leq (2-\alpha) \beta
\leq  1-\alpha .
\] Thus, by Theorem~\ref{suffi}(2),  $f\in\mathcal{C}_\alpha$. The
function $f_0:\mathbb{D}\rightarrow \mathbb{C}$ defined by
\[f_0(z)=z-\frac{1}{2}\frac{1-\alpha}{2-\alpha}z^2\] satisfies the
hypothesis of Theorem~\ref{suffi} and therefore $f_0\in
\mathcal{C}_\alpha$. This function $f_0$ shows that the bound for
$\beta$ cannot be replaced by a larger number.

(2) Now, let the function $f$ satisfy the inequality
\eqref{th2.1eq1} with $\beta\leq 2(1-\alpha)/(2-\alpha)$. When
$n\geq 2$, the inequality  \eqref{th2.1eq2} holds and this leads to
\[ (n-\alpha) \leq \frac{n(n-\alpha)}{2}\leq \frac{(2-\alpha)n(n-1)}{2} \quad (n\geq 2)\] and hence
\[ \sum_{n=2}^{\infty}(n-\alpha)|a_n|
\leq\frac{(2-\alpha)}{2}\sum_{n=2}^{\infty}n(n-1)|a_n| \leq
(1-\alpha).\] By Theorem \ref{suffi}(1), $ f\in
\mathcal{S}^*_\alpha$.
 The function
\[f_0(z)=z- \frac{1-\alpha}{2-\alpha}z^2\in
\mathcal{S}^*_\alpha\] shows that the result is sharp.
\end{proof}

\begin{corollary}{\rm\cite[Theorem 3.3, p. 334]{kanas}} If $f\in \mathcal{A}$ given
by \eqref{f(z)} satisfies the inequality
\[\sum_{n=2}^{\infty}n(n-1)|a_n|\leq \frac{1}{k+2},\]
then $f\in  k-\UCV$. Further, the bound $1/(k+2)$ cannot be replaced
by a larger number.
\end{corollary}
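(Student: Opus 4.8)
The plan is to derive this corollary as an immediate consequence of Theorem~\ref{th1}(1) together with the Kanas and Wisniowska characterization of $k$-uniform convexity recalled in the introduction, namely that $f\in k-\UCV$ if and only if $k\left|zf''(z)/f'(z)\right|<\RE\left(1+zf''(z)/f'(z)\right)$. The key step is to choose the order $\alpha$ so that the sharp threshold $(1-\alpha)/(2-\alpha)$ appearing in Theorem~\ref{th1}(1) coincides with $1/(k+2)$; solving $(1-\alpha)/(2-\alpha)=1/(k+2)$ gives $\alpha=k/(k+1)$, which lies in $[0,1)$ for every $k\in[0,\infty)$, so Theorem~\ref{th1}(1) applies with this value.

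With $\alpha=k/(k+1)$, the hypothesis $\sum_{n=2}^{\infty}n(n-1)|a_n|\le 1/(k+2)=(1-\alpha)/(2-\alpha)$ is exactly the hypothesis of Theorem~\ref{th1}(1). Hence $f\in\mathcal{C}_\alpha$; in particular $f\in\mathcal{S}$, and by the very definition of $\mathcal{C}_\alpha$ one has $\left|zf''(z)/f'(z)\right|<1-\alpha=1/(k+1)$ for every $z\in\mathbb{D}$.

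Next I would set $w=zf''(z)/f'(z)$ and chain two elementary estimates through the common value $k/(k+1)$: on one hand $k|w|<k/(k+1)$, and on the other, since $|w|<1/(k+1)$, we get $\RE(1+w)\ge 1-|w|>1-1/(k+1)=k/(k+1)$. Together these give $k|w|<\RE(1+w)$, that is, $k\left|zf''(z)/f'(z)\right|<\RE\left(1+zf''(z)/f'(z)\right)$ throughout $\mathbb{D}$, whence $f\in k-\UCV$ by the characterization above. For the sharpness statement, the extremal function is the one already produced in the proof of Theorem~\ref{th1}(1), namely $f_0(z)=z-\frac{1}{2}\frac{1-\alpha}{2-\alpha}z^2=z-\frac{1}{2(k+2)}z^2$, and the same verification as there (equivalently, as in Theorem~\ref{T1}) shows that $1/(k+2)$ cannot be enlarged.

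The only place that needs attention is the strictness of the inequalities: the crude bounds $k|w|\le 1-|w|\le\RE(1+w)$, valid whenever $|w|\le 1/(k+1)$, yield only a non-strict inequality, which would not suffice for membership in $k-\UCV$. The strict inequality $|w|<1/(k+1)$ is, however, already built into the definition of $\mathcal{C}_\alpha$, so the passage to the strict form $k|w|<\RE(1+w)$ costs nothing extra; this is essentially the whole content of the argument.
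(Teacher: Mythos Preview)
Your proof is correct and follows essentially the same route as the paper: choose $\alpha=k/(k+1)$ so that $(1-\alpha)/(2-\alpha)=1/(k+2)$, apply Theorem~\ref{th1}(1) to obtain $|zf''(z)/f'(z)|<1/(k+1)$, and then chain the estimates through the common value $k/(k+1)$ to reach the Kanas--Wisniowska inequality for $k$-uniform convexity. The paper presents this chain in the single line $k|w|<k/(k+1)=1-1/(k+1)<1-|w|<1+\RE w$, which is exactly your argument; your added remarks on strictness and on the sharpness via the extremal $f_0(z)=z-\tfrac{1}{2(k+2)}z^2$ only make explicit what the paper leaves implicit.
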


\begin{proof} By Theorem~\ref{th1}(1), it follows that $f\in\mathcal{C}_{k/(k+1)}$ and hence the following
inequality holds: \begin{equation}\label{kucv-suffi}
\left|\frac{zf''(z)}{f'(z)} \right| < \frac{1}{k+1}. \end{equation}
The inequality \eqref{kucv-suffi} yields
\[ k  \left|\frac{zf''(z)}{f'(z)} \right|< \frac{k}{k+1}=
1-\frac{1}{k+1}< 1- \left|\frac{zf''(z)}{f'(z)} \right| <1+ \RE
\left(\frac{zf''(z)}{f'(z)} \right),\]  and this proves that $f\in
k-\UCV$.
\end{proof}

Since $f\in \mathcal{C}_\alpha$ if and only if $zf'\in
\mathcal{S}^*_\alpha$, Theorem~\ref{th1} (1) immediately yields the
following result. It also follows from Theorem~\ref{suffi}(1) and
the inequality $n-\alpha\leq (2-\alpha)(n-1)$, $n\geq 2$.

\begin{corollary}\label{cor2}
Let $\alpha\in[0,1)$. If $f\in \mathcal{A}$ is given by \eqref{f(z)}
and
\[\sum_{n=2}^{\infty} (n-1)|a_n|\leq \frac{1-\alpha}{2-\alpha},\]
then $f\in  \mathcal{S}^*_\alpha$. Further, the bound
$(1-\alpha)/(2-\alpha)$ cannot be replaced by a larger number.
\end{corollary}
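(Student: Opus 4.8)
The plan is to deduce Corollary~\ref{cor2} from Theorem~\ref{th1}(1) via the Alexander-type equivalence, exactly as the remark preceding the statement suggests. First I would recall that a function $f\in\mathcal{A}$ given by \eqref{f(z)} satisfies $f\in\mathcal{C}_\alpha$ if and only if $g(z):=zf'(z)\in\mathcal{S}^*_\alpha$; this is immediate from the defining inequalities, since $zg'(z)/g(z)=1+zf''(z)/f'(z)$, so $|zg'(z)/g(z)-1|<1-\alpha$ is the same as $|zf''(z)/f'(z)|<1-\alpha$. Given $f(z)=z+\sum_{n\ge 2}a_n z^n$ with $\sum_{n\ge 2}(n-1)|a_n|\le (1-\alpha)/(2-\alpha)$, I would define $F(z):=\int_0^z f(t)/t\,dt = z+\sum_{n\ge 2}(a_n/n)z^n$, so that $zF'(z)=f(z)$ and hence $F\in\mathcal{C}_\alpha \iff f\in\mathcal{S}^*_\alpha$.

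Next, writing $F(z)=z+\sum_{n\ge 2}b_n z^n$ with $b_n=a_n/n$, the hypothesis becomes
\[
\sum_{n=2}^{\infty} n(n-1)|b_n| = \sum_{n=2}^{\infty}(n-1)|a_n| \le \frac{1-\alpha}{2-\alpha}.
\]
Thus $F$ satisfies \eqref{th2.1eq1} with $\beta=(1-\alpha)/(2-\alpha)$, so Theorem~\ref{th1}(1) gives $F\in\mathcal{C}_\alpha$, and therefore $f(z)=zF'(z)\in\mathcal{S}^*_\alpha$, which is the first assertion. Alternatively — and this is the one-line route flagged in the remark — one applies Theorem~\ref{suffi}(1) directly: the elementary inequality $n-\alpha\le(2-\alpha)(n-1)$ for $n\ge 2$ (already recorded as \eqref{th2.1eq2}) yields
\[
\sum_{n=2}^{\infty}(n-\alpha)|a_n| \le (2-\alpha)\sum_{n=2}^{\infty}(n-1)|a_n| \le (2-\alpha)\cdot\frac{1-\alpha}{2-\alpha} = 1-\alpha,
\]
so $f\in\mathcal{S}^*_\alpha$ by Theorem~\ref{suffi}(1). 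I would probably present this direct argument as the main proof since it avoids invoking the Alexander equivalence.

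For sharpness I would exhibit the extremal function $f_0(z)=z-\frac{1-\alpha}{2-\alpha}z^2$. It satisfies $\sum_{n\ge 2}(n-1)|a_n| = (1-\alpha)/(2-\alpha)$ with equality, and one checks it meets \eqref{suffi1} with equality, so $f_0\in\mathcal{S}^*_\alpha$ but no strictly larger constant works: for any $\beta'>(1-\alpha)/(2-\alpha)$, scaling the quadratic term to make $\sum(n-1)|a_n|=\beta'$ produces a function with $|z_0 f_0'(z_0)/f_0(z_0)-1|\ge 1-\alpha$ at $z_0$ near the boundary. I do not expect any real obstacle here; the only mild care needed is in the sharpness discussion, namely verifying that the quadratic extremal genuinely fails the membership condition once the bound is exceeded, rather than merely failing the \emph{sufficient} condition \eqref{suffi1}. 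Since Theorem~\ref{suffi}(1) states that \eqref{suffi1} is also necessary when $a_n\le 0$ (which holds for $f_0$), membership in $\mathcal{S}^*(\alpha)$ — and a fortiori in $\mathcal{S}^*_\alpha$ — is equivalent to \eqref{suffi1} for such functions, so the sharpness is genuine and not an artifact of the method.
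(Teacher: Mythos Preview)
Your proposal is correct and matches the paper's approach exactly: the paper states (without a formal proof) that the corollary follows either from Theorem~\ref{th1}(1) via the Alexander-type equivalence $f\in\mathcal{C}_\alpha \iff zf'\in\mathcal{S}^*_\alpha$, or directly from Theorem~\ref{suffi}(1) together with the inequality $n-\alpha\le(2-\alpha)(n-1)$ for $n\ge 2$, and you have written out both routes. Your sharpness discussion with $f_0(z)=z-\frac{1-\alpha}{2-\alpha}z^2$ and the appeal to the necessity part of Theorem~\ref{suffi}(1) for negative coefficients is also correct and slightly more detailed than what the paper provides.
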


\begin{remark}
Theorem~\ref{T2} for the class of parabolic starlike functions of
order $\rho$ was obtained by Ali \cite[Theorem 3.1, p.\ 564]{ali1}
by using a two variable characterization of a corresponding class of
uniformly convex functions.  However, Theorem~\ref{T2}  follows
directly from the Corollary~\ref{cor2} and his sufficient condition
\cite[Theorem 2.2, p.\ 563]{ali1} for functions to be parabolic
starlike of order $\rho$.
\end{remark}

\begin{theorem} \label{th5}Let $\alpha\in [0,1)$ and $f\in \mathcal{A}$
be given by \eqref{f(z)}.

\begin{enumerate}
  \item [(1)]If the inequality $\sum_{n=2}^{\infty}n|a_n|\leq  1-\alpha$ holds,
then $f\in \mathcal{ S}^*_\alpha$.
  \item[(2)] If  the inequality
$\sum_{n=2}^{\infty}n^2|a_n|\leq1-\alpha$ holds, then $f\in
\mathcal{C}_{\alpha}$.

\item[(3)] If  the inequality
$\sum_{n=2}^{\infty}n^2|a_n|\leq 4(1-\alpha)/(2-\alpha)$ holds, then
$f\in \mathcal{S}^*_{\alpha}$ and the bound $4(1-\alpha)/(2-\alpha)$
is sharp.
\end{enumerate}

\end{theorem}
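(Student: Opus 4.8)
The plan is to derive each of the three parts from Theorem~\ref{suffi} by bounding the relevant coefficient sum. For part (1), I would observe that for $n\geq 2$ one has $n-\alpha\leq n$, so the hypothesis $\sum_{n=2}^\infty n|a_n|\leq 1-\alpha$ immediately gives $\sum_{n=2}^\infty (n-\alpha)|a_n|\leq \sum_{n=2}^\infty n|a_n|\leq 1-\alpha$, and Theorem~\ref{suffi}(1) yields $f\in\mathcal{S}^*_\alpha$. Part (2) is the analogous computation one level up: since $n-\alpha\leq n$ for $n\geq 2$, the hypothesis $\sum_{n=2}^\infty n^2|a_n|\leq 1-\alpha$ gives $\sum_{n=2}^\infty n(n-\alpha)|a_n|\leq \sum_{n=2}^\infty n^2|a_n|\leq 1-\alpha$, so Theorem~\ref{suffi}(2) gives $f\in\mathcal{C}_\alpha$. (I would first check that the hypothesis forces $\mathrm{Re}\,f'(z)>0$ so that $f\in\mathcal{S}$, exactly as in the proof of Theorem~\ref{th1}, since Theorem~\ref{suffi} is stated for functions already known to be in $\mathcal{S}$; but in fact the coefficient bound itself does this, so this is routine.)

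For part (3), the key elementary inequality is $n^2\,(2-\alpha)\geq 4(n-\alpha)$ for $n\geq 2$ and $\alpha\in[0,1)$, equivalently $(2-\alpha)n^2-4n+4\alpha\geq 0$. I would verify this by noting that at $n=2$ it reads $4(2-\alpha)-8+4\alpha=0$, so equality holds at $n=2$, and for $n\geq 2$ the quadratic $q(n)=(2-\alpha)n^2-4n+4\alpha$ has vertex at $n=2/(2-\alpha)\leq 2$ (since $2-\alpha\geq 1$) with positive leading coefficient, hence $q$ is increasing on $[2,\infty)$ and nonnegative there. Consequently $n-\alpha\leq \frac{2-\alpha}{4}n^2$ for all $n\geq 2$, so the hypothesis $\sum_{n=2}^\infty n^2|a_n|\leq 4(1-\alpha)/(2-\alpha)$ gives
\[
\sum_{n=2}^\infty (n-\alpha)|a_n|\leq \frac{2-\alpha}{4}\sum_{n=2}^\infty n^2|a_n|\leq \frac{2-\alpha}{4}\cdot\frac{4(1-\alpha)}{2-\alpha}=1-\alpha,
\]
and Theorem~\ref{suffi}(1) gives $f\in\mathcal{S}^*_\alpha$.

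For the sharpness claim in part (3), I would exhibit the extremal function $f_0(z)=z-\frac{1-\alpha}{2-\alpha}z^2$. For this function the coefficient sum $\sum_{n=2}^\infty n^2|a_n|=4\cdot\frac{1-\alpha}{2-\alpha}=4(1-\alpha)/(2-\alpha)$ meets the bound with equality, and $\sum_{n=2}^\infty(n-\alpha)|a_n|=(2-\alpha)\cdot\frac{1-\alpha}{2-\alpha}=1-\alpha$, so by the necessity half of Theorem~\ref{suffi}(1) (applicable since the nonzero coefficient $a_2=-\frac{1-\alpha}{2-\alpha}\leq 0$) the function $f_0$ lies on the boundary of $\mathcal{S}^*(\alpha)$; increasing $\beta$ would admit a function like $f_0$ scaled slightly, violating starlikeness of order $\alpha$. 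I do not anticipate a genuine obstacle here — the whole theorem is a sequence of elementary coefficient estimates feeding into Theorem~\ref{suffi} — but the one point requiring care is the verification of the inequality $(2-\alpha)n^2\geq 4(n-\alpha)$ uniformly in $n\geq 2$ and $\alpha\in[0,1)$, since that is the only step where the precise constant $4(1-\alpha)/(2-\alpha)$ (and hence the sharpness) is pinned down.
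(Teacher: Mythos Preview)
Your proposal is correct and follows essentially the same approach as the paper: all three parts are deduced from Theorem~\ref{suffi} via the elementary inequalities $n-\alpha\leq n$ (parts (1) and (2)) and $n-\alpha\leq (2-\alpha)n^2/4$ for $n\geq 2$ (part (3)), with sharpness in part (3) witnessed by the same extremal function $f_0(z)=z-\frac{1-\alpha}{2-\alpha}z^2$. Your treatment is in fact more detailed than the paper's, which simply asserts the inequality $(n-\alpha)\leq n^2(2-\alpha)/4$ without the quadratic verification you supply.
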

\begin{proof}The first two parts  follow from  Theorem \ref{suffi} and the simple inequality
$n-\alpha < n $. The third part follows from Theorem \ref{suffi}(1)
and the identity: $ (n-\alpha)\leq n^2(2-\alpha)/4 \quad (n\geq2)$.
 The result is sharp for the function $f_0$ given by
\[f_0(z)=z-\frac{1-\alpha}{2-\alpha}z^2.\qedhere\]\end{proof}

\section{The subclass $\mathcal{R}(\alpha,\beta)$}\noindent
Recall that the class $\mathcal{R}(\alpha, \beta)$  consists of
functions $f$ satisfying the   inequality
\begin{equation}\label{sec3R} \RE
\left(\frac{zf'(z)}{f(z)}\Big(\alpha\frac{zf''(z)}{f'(z)}+1
\Big)\right)>\beta, \quad ( \beta<1,\ \alpha\in
\mathbb{R}).\end{equation} The following Lemma~\ref{lem3.1} provides
a sufficient coefficient condition for functions $f$ to belong to
the class $\mathcal{R}(\alpha,\beta)$. In this section, conditions
are determined so that the sufficient coefficient condition in
Lemma~\ref{lem3.1} implies starlikeness and convexity of certain
order. Also the sufficient coefficient inequalities are considered
for functions to belong to the class $\mathcal{R}(\alpha,\beta)$.

\begin{lemma} {\rm\cite[cf. Theorem 6, p.\ 412]{liu}}\label{lem3.1}  Let
$  \beta<1$, and $\alpha\in \mathbb{R}$.   If $f\in\mathcal{A}$
satisfies the inequality \begin{equation}\label{th3.2e1}
\sum_{n=2}^{\infty}\big(\alpha
n^2+(1-\alpha)n-\beta\big)|a_n|\leq1-\beta,
\end{equation} then
$f\in\mathcal{R}(\alpha, \beta)$.
 \end{lemma}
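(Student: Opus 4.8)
The plan is to deduce membership in $\mathcal{R}(\alpha,\beta)$ from the stronger disc estimate
\[
\left|\frac{zf'(z)}{f(z)}\left(\alpha\frac{zf''(z)}{f'(z)}+1\right)-1\right|<1-\beta\qquad(z\in\mathbb{D}),
\]
which, since $1-\beta>0$, forces the real part in \eqref{sec3R} to exceed $\beta$. First I would rewrite the bracketed expression: a direct computation gives $\frac{zf'(z)}{f(z)}\big(\alpha\frac{zf''(z)}{f'(z)}+1\big)=\big(\alpha z^{2}f''(z)+zf'(z)\big)/f(z)$, and substituting $zf'(z)=z+\sum_{n\ge2}na_{n}z^{n}$ and $z^{2}f''(z)=\sum_{n\ge2}n(n-1)a_{n}z^{n}$ yields
\[
\alpha z^{2}f''(z)+zf'(z)-f(z)=\sum_{n=2}^{\infty}b_{n}a_{n}z^{n},\qquad b_{n}:=(n-1)(\alpha n+1)=\alpha n^{2}+(1-\alpha)n-1 .
\]
Thus the target inequality is equivalent to $\big|\sum_{n\ge2}b_{n}a_{n}z^{n}\big|<(1-\beta)|f(z)|$.

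Next I would estimate, for $|z|=r<1$: by the triangle inequality $\big|\sum b_{n}a_{n}z^{n}\big|\le\sum|b_{n}||a_{n}|r^{n}$, while $|f(z)|\ge r-\sum|a_{n}|r^{n}=r\big(1-\sum|a_{n}|r^{n-1}\big)>0$, the positivity and the bound $\sum|a_{n}|\le1$ both following from the hypothesis. Hence it suffices to check $\sum|b_{n}||a_{n}|r^{n-1}\le(1-\beta)\big(1-\sum|a_{n}|r^{n-1}\big)$, i.e.\ $\sum_{n\ge2}\big(|b_{n}|+1-\beta\big)|a_{n}|r^{n-1}\le1-\beta$. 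Using $r^{n-1}\le r<1$ for $n\ge2$, this follows once $\sum_{n\ge2}\big(|b_{n}|+1-\beta\big)|a_{n}|\le1-\beta$; and when $b_{n}\ge0$ one has $|b_{n}|+1-\beta=\alpha n^{2}+(1-\alpha)n-\beta$, so the latter is exactly hypothesis \eqref{th3.2e1}. The factor $r<1$ moreover delivers the strict inequality needed for $\RE(\cdot)>\beta$, the degenerate case $f(z)=z$ being trivial.

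The one step requiring care — and the place where the hypothesis $\alpha\in\mathbb{R}$ must be read appropriately — is the claim $|b_{n}|=b_{n}$ for all $n\ge2$, i.e.\ $b_{n}=(n-1)(\alpha n+1)\ge0$. This holds precisely when $\alpha\ge-1/2$ (in particular for every $\alpha\ge0$), and in that range $\alpha n^{2}+(1-\alpha)n-\beta=b_{n}+(1-\beta)\ge1-\beta>0$, so \eqref{th3.2e1} is a genuine restriction on the coefficients and all the reductions above are legitimate; I would carry out the argument in this regime, falling back on the estimate in \cite{liu} for $\alpha<-1/2$. Beyond this bookkeeping no substantial obstacle remains.
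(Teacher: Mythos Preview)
The paper does not supply its own proof of this lemma; it is quoted (with the qualifier ``cf.'') from \cite{liu}. Your argument is the standard route for coefficient sufficient conditions of this type and is almost certainly the one in the cited source: rewrite the functional as $(\alpha z^{2}f''+zf')/f$, bound the modulus of this quotient minus $1$ by $1-\beta$ via the triangle inequality, and read off the coefficient condition. The computations you outline are correct.

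Your caveat about the sign of $b_{n}=(n-1)(\alpha n+1)$ is well taken and is a genuine restriction, not mere bookkeeping: for $\alpha<-1/2$ some of the weights $\alpha n^{2}+(1-\alpha)n-\beta$ in \eqref{th3.2e1} become negative, so the hypothesis no longer controls $\sum|a_{n}|$ and the lower bound $|f(z)|\ge r-\sum|a_{n}|r^{n}>0$ can fail; the lemma as stated for arbitrary $\alpha\in\mathbb{R}$ is therefore at best formal. However, every application of the lemma in the paper (Theorems~\ref{th6} and~\ref{th7}, Corollary~\ref{C6}) assumes $\alpha>0$ or $\alpha\ge 0$, so the range $\alpha\ge -1/2$ you establish is more than sufficient, and there is no need to ``fall back'' on anything.
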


It should be remarked that Lemma~\ref{lem3.1} reduces to
Theorem~\ref{suffi}\eqref{suffi1a} in the special case $\alpha=0$.
The following theorem provides sufficient coefficient conditions for
functions to belong to either $\mathcal{R}(\alpha, \beta)\cap
\mathcal{S^*}_{\eta}$ or $\mathcal{R}(\alpha, \beta)\cap
\mathcal{C}_{\eta}$.

\begin{theorem} \label{th6}Let $ \beta<1$ and  $\alpha>0$. If the function $f\in\mathcal{A}$
satisfies the inequality \eqref{th3.2e1}, then the following holds.
\begin{enumerate}
\item[(1)] The function $f$ is in the class $ \mathcal{S^*}_{\eta}$ for  $\eta\leq
(2\alpha+\beta)/(2\alpha+1)$ and the bound
$(2\alpha+\beta)/(2\alpha+1)$ is sharp.

\item[(2)] The function $f$ is in the class $\mathcal{C}_{\eta}$ for $\eta\leq
(\alpha-1+\beta)/\alpha$, $\beta>0$.
\end{enumerate}
\end{theorem}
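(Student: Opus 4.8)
The plan is to deduce both statements from the coefficient criteria recorded in Theorem~\ref{suffi}, by comparing the coefficient functional of \eqref{th3.2e1} with the appropriate target functional term by term; note that \eqref{th3.2e1} also yields $f\in\mathcal{R}(\alpha,\beta)$ by Lemma~\ref{lem3.1}, so the two conclusions sharpen to membership in $\mathcal{R}(\alpha,\beta)\cap\mathcal{S^*}_{\eta}$ and $\mathcal{R}(\alpha,\beta)\cap\mathcal{C}_{\eta}$ respectively. Write $c_n:=\alpha n^{2}+(1-\alpha)n-\beta$, so the hypothesis reads $\sum_{n\ge2}c_n|a_n|\le 1-\beta$. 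Since $\mathcal{S^*}_{\eta}\subseteq\mathcal{S^*}_{\eta'}$ and $\mathcal{C}_{\eta}\subseteq\mathcal{C}_{\eta'}$ whenever $\eta'\le\eta$, in each part it suffices to establish the conclusion for the extremal value of $\eta$ named in the statement.

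For part (1), take $\eta=(2\alpha+\beta)/(2\alpha+1)$ and note $1-\eta=(1-\beta)/(2\alpha+1)>0$. By Theorem~\ref{suffi}(1) it is enough to show $\sum_{n\ge2}(n-\eta)|a_n|\le 1-\eta$, and for that it suffices to verify the pointwise estimate $(n-\eta)(1-\beta)\le(1-\eta)c_n$ for each $n\ge2$: multiplying by $|a_n|$, summing, and using \eqref{th3.2e1} then yields $\sum(n-\eta)|a_n|\le\frac{1-\eta}{1-\beta}\sum c_n|a_n|\le 1-\eta$. The only computation here is to expand $(1-\eta)c_n-(1-\beta)(n-\eta)$ for this $\eta$; it collapses to $\dfrac{\alpha(1-\beta)}{2\alpha+1}(n-1)(n-2)$, visibly $\ge0$ for $n\ge2$. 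Sharpness is witnessed by $f_0(z)=z-\dfrac{1-\beta}{2\alpha+2-\beta}z^{2}$: since $c_2=2\alpha+2-\beta$, its only nonzero coefficient makes \eqref{th3.2e1} an equality, while from $zf_0'(z)/f_0(z)-1=a_2z/(1+a_2z)$ one gets $\sup_{z\in\mathbb{D}}\bigl|zf_0'(z)/f_0(z)-1\bigr|=|a_2|/(1-|a_2|)=(1-\beta)/(2\alpha+1)=1-\eta$, so that $f_0\notin\mathcal{S^*}_{\eta'}$ once $\eta'>(2\alpha+\beta)/(2\alpha+1)$.

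Part (2) runs in parallel, now via Theorem~\ref{suffi}(2). Taking $\eta=(\alpha-1+\beta)/\alpha$ (so $1-\eta=(1-\beta)/\alpha>0$), it suffices to show $\sum_{n\ge2}n(n-\eta)|a_n|\le1-\eta$, hence the pointwise inequality $n(n-\eta)(1-\beta)\le(1-\eta)c_n$ for $n\ge2$. Expanding $(1-\eta)c_n-(1-\beta)n(n-\eta)$ for this $\eta$, the $n^{2}$-terms cancel and what is left is $\dfrac{\beta(1-\beta)}{\alpha}(n-1)$, which is $\ge0$ for $n\ge2$ precisely because $\beta>0$ — this is exactly where the extra hypothesis $\beta>0$ enters. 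Summing against $|a_n|$ and invoking \eqref{th3.2e1} finishes the proof. The only genuine obstacle anywhere is the algebraic bookkeeping showing that the difference of the two coefficient functionals factors with all of its roots at $n\le 2$; once the right extremal $\eta$ is identified this is immediate, and the rest is a one-line summation and an appeal to Theorem~\ref{suffi}.
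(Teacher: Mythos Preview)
Your proof is correct and follows essentially the same route as the paper: reduce to the extremal $\eta_0$, compare coefficients term by term against $c_n=\alpha n^2+(1-\alpha)n-\beta$, sum, and apply Theorem~\ref{suffi}; the key inequalities $\alpha(n-1)(n-2)\ge0$ and $\beta(n-1)\ge0$ that you factor explicitly are exactly the ones the paper uses (stated there as $(2\alpha+1)n-2\alpha\le\alpha n^2+(1-\alpha)n$ and $n\beta\ge\beta$). Your sharpness discussion in part~(1) is in fact more detailed than the paper's, which merely exhibits $f_0$ without verifying that the supremum of $|zf_0'/f_0-1|$ equals $1-\eta_0$.
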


\begin{proof} (1)  If $\eta\leq \eta_0:=(2\alpha+\beta)/(2\alpha+1)$, then
$\mathcal{S}^*_{\eta_0}\subset  \mathcal{S}^*_\eta$.  Hence it is
enough to prove that $f\in \mathcal{S}^*_{\eta_0}$. The inequality
\begin{align*}
&(2\alpha +1)n-2\alpha\leq \alpha n^2+(1-\alpha)n \quad (n\geq2,\
\alpha\geq0)
\end{align*}
together with \eqref{th3.2e1} shows  that
\begin{align*}
\sum_{n=2}^{\infty}(n-\eta_0)|a_n| &=\sum_{n=2}^{\infty}
\frac{(2\alpha+1)n-2\alpha-\beta}{2\alpha+1}|a_n|\\
& \leq\sum_{n=2}^{\infty}\frac{\alpha n^2+(1-\alpha)n-\beta}{2\alpha+1} |a_n| \\
&\leq\frac{1-\beta}{2\alpha+1}= 1-\eta_0. \end{align*}  Thus, by
Theorem~\ref{suffi}(1),  $f\in \mathcal{S^*}_{\eta_0}$. The result
is sharp for the function $f_0\in\mathcal{S^*}\left(\eta_0\right)$
given by
\[f_0(z)=z-\frac{1-\beta}{2\alpha+2-\beta}z^2
.\]

(2) If $\eta\leq \eta_0:=(\alpha-1+\beta)/\alpha$, then
$\mathcal{C}_{\eta_0}\subset  \mathcal{C}_\eta$.  Hence it is enough
to prove that $f\in \mathcal{C}_{\eta_0}$.  The inequality
\[\alpha n^2+(1-\alpha)n-n\beta  \leq \alpha n^2+(1-\alpha)n-\beta
\quad( n\geq2, \quad \beta\geq0),\]  together with \eqref{th3.2e1}
yields
\begin{align*} \sum_{n=2}^{\infty} n(n-\eta_0)|a_n|&
=\frac{1}{\alpha} \sum_{n=2}^{\infty}\big(\alpha n^2+(1-\alpha)n-n\beta\big)|a_n|\\
 & \leq \frac{1}{\alpha}  \sum_{n=2}^{\infty}\left(\alpha
 n^2+(1-\alpha)n-\beta\right)|a_n|\\
 & \leq \frac{1-\beta}{\alpha}=
 1-\eta_0.
 \end{align*}   Thus,   by
Theorem~\ref{suffi}(2), $f\in \mathcal{C}_{\eta_0}$.
\end{proof}

Along the same line as Theorem~\ref{th1},  the following theorem
provides a sufficient coefficient inequality for functions to belong
to the class $\mathcal{R}(\alpha,\beta)$.

\begin{theorem} \label{th7}Let $\beta<1$, $\alpha\in \mathbb{R}$ and $f\in
\mathcal{A}$.
\begin{enumerate}
\item[(1)] If the function $f$ satisfies the inequality $\sum_{n=2}^{\infty}n(n-1)|a_n| \leq
2(1-\beta)/(2\alpha+2-\beta),$ then $f\in\mathcal{R}(\alpha,
\beta)$. The bound $2(1-\beta)/(2\alpha+2-\beta)$ is sharp.

\item[(2)] Let $\alpha\leq1$ and $\eta\in \mathbb{R}$ be defined by \[ \eta=
\begin{cases}
4(1-\beta)/(3\alpha+1),\quad & \alpha+\beta>1,\\
4(1-\beta)/(2\alpha+2-\beta),\quad & \alpha+\beta\leq 1.
\end{cases}\]  If the function $f$ satisfies the inequality  $\sum_{n=2}^{\infty}n^2|a_n|\leq \eta $,
then $f\in\mathcal{R}(\alpha, \beta)$. When $\alpha+\beta\leq 1$,
the result is sharp.
\end{enumerate}
\end{theorem}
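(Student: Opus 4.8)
The plan is to deduce both parts from Lemma~\ref{lem3.1}, so in each case the only real task is to bound the weight $\alpha n^{2}+(1-\alpha)n-\beta$ occurring in \eqref{th3.2e1} by a suitable constant multiple of the weight appearing in the hypothesis. For part~(1) I would first record the elementary inequality, valid for every integer $n\ge 2$,
\[
\alpha n^{2}+(1-\alpha)n-\beta\le\frac{2\alpha+2-\beta}{2}\,n(n-1),
\]
which I expect to obtain at once from the algebraic identity
\[
\frac{2\alpha+2-\beta}{2}\,n(n-1)-\bigl(\alpha n^{2}+(1-\alpha)n-\beta\bigr)=\frac{2-\beta}{2}(n-2)^{2}+\frac{4-3\beta}{2}(n-2),
\]
whose right-hand side is nonnegative for $n\ge 2$ because $\beta<1$. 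Multiplying by $|a_{n}|$, summing, and using $\sum_{n\ge2}n(n-1)|a_{n}|\le 2(1-\beta)/(2\alpha+2-\beta)$ gives $\sum_{n\ge2}\bigl(\alpha n^{2}+(1-\alpha)n-\beta\bigr)|a_{n}|\le 1-\beta$, so Lemma~\ref{lem3.1} yields $f\in\mathcal{R}(\alpha,\beta)$.

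For part~(2) the same strategy works with $n^{2}$ replacing $n(n-1)$, but the comparison constant is governed by the sign of $\alpha+\beta-1$, which is exactly why $\eta$ is defined by cases. When $\alpha+\beta>1$ I would use
\[
\frac{3\alpha+1}{4}\,n^{2}-\bigl(\alpha n^{2}+(1-\alpha)n-\beta\bigr)=\frac{1-\alpha}{4}(n-2)^{2}+(\alpha+\beta-1),
\]
which is nonnegative for all $n$ since $\alpha\le 1$ and $\alpha+\beta>1$; when $\alpha+\beta\le 1$ I would use
\[
\frac{2\alpha+2-\beta}{4}\,n^{2}-\bigl(\alpha n^{2}+(1-\alpha)n-\beta\bigr)=\frac{2-2\alpha-\beta}{4}(n-2)^{2}+(1-\alpha-\beta)(n-2),
\]
which is nonnegative for $n\ge 2$ because the hypotheses $\alpha\le 1$ and $\alpha+\beta\le 1$ force both $2-2\alpha-\beta=(1-\alpha)+(1-\alpha-\beta)$ and $1-\alpha-\beta$ to be nonnegative. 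In either regime, summing against $|a_{n}|$ and inserting $\sum_{n\ge2}n^{2}|a_{n}|\le\eta$ produces $\sum_{n\ge2}\bigl(\alpha n^{2}+(1-\alpha)n-\beta\bigr)|a_{n}|\le 1-\beta$, and Lemma~\ref{lem3.1} again applies.

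To settle sharpness (unconditionally in part~(1), and in part~(2) when $\alpha+\beta\le 1$) I would test the function
\[
f_{0}(z)=z-\frac{1-\beta}{2\alpha+2-\beta}\,z^{2},
\]
which meets the relevant hypothesis with equality: writing $c=(1-\beta)/(2\alpha+2-\beta)$, one has $2\,|a_{2}|=2(1-\beta)/(2\alpha+2-\beta)$ and $4\,|a_{2}|=4(1-\beta)/(2\alpha+2-\beta)=\eta$. A short rational simplification should collapse the quantity in \eqref{sec3R} for $f_{0}$ to
\[
\frac{zf_{0}'(z)}{f_{0}(z)}\Bigl(\alpha\frac{zf_{0}''(z)}{f_{0}'(z)}+1\Bigr)=\frac{1-2c(1+\alpha)z}{1-cz},
\]
and substituting $c$ and letting $z\to 1^{-}$ along the real axis I expect the value to tend to exactly $\beta$; thus $f_{0}$ sits on the boundary of $\mathcal{R}(\alpha,\beta)$ and neither coefficient bound can be enlarged. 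The routine part of the work is the three completed-square identities together with the M\"obius-type simplification for $f_{0}$; the only genuine decision is the split according as $\alpha+\beta>1$ or not in part~(2), dictated precisely by the requirement that all terms of the corresponding identity be nonnegative. Beyond this bookkeeping I do not anticipate any serious obstacle.
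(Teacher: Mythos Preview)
Your proposal is correct and follows the same route as the paper: in each case you compare the weight $\alpha n^{2}+(1-\alpha)n-\beta$ with the appropriate multiple of $n(n-1)$ or $n^{2}$ and then invoke Lemma~\ref{lem3.1}, and you exhibit the same extremal function $f_{0}(z)=z-\dfrac{1-\beta}{2\alpha+2-\beta}z^{2}$ for sharpness. The only difference is one of presentation: the paper simply asserts the three comparison inequalities (e.g.\ $2\alpha n^{2}+2(1-\alpha)n-2\beta\le(2\alpha+2-\beta)n(n-1)$) without justification, whereas you supply the completed-square identities that make their nonnegativity transparent and, in the process, explain exactly why the case split in part~(2) is forced by the sign of $\alpha+\beta-1$. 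Your sharpness verification via the explicit M\"obius expression $(1-2c(1+\alpha)z)/(1-cz)$ is also more detailed than the paper's, which merely names $f_{0}$ and declares sharpness.
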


\begin{proof}

(1) Let the function $f$ satisfy the inequality
\[\sum_{n=2}^{\infty}n(n-1)|a_n| \leq
2(1-\beta)/(2\alpha+2-\beta).\]
Since,  for $ n\geq2$, \[ 2\alpha n^2+2(1-\alpha)n-2\beta \leq
(2\alpha+2-\beta)n(n-1),\]  it follows that
\[\sum_{n=2}^{\infty}\big(\alpha n^2+(1-\alpha)n-\beta\big) |a_n|
\leq\frac{1}{2}\sum_{n=2}^{\infty}n(n-1)(2\alpha+2-\beta)|a_n|\leq
1-\beta,\] and so, by Lemma~\ref{lem3.1}, $f\in\mathcal{R}(\alpha,
\beta)$. The result is sharp for the  function $f_0\in
\mathcal{R}(\alpha, \beta)$ given by
\[f_0(z)=z-\frac{1-\beta}{2\alpha+2-\beta}z^2.\]

(2) Let  $\alpha+ \beta>1$ and the function $f$ satisfy the
inequality  \[ \sum_{n=2}^{\infty}n^2|a_n|\leq
4(1-\beta)/(3\alpha+1).\] In this case, the use of the inequality
\[  4\left(\alpha n^2+(1-\alpha)n-\beta\right)\leq (3\alpha+1)n^2\quad (n\geq 2)\]
readily  yields
\[\sum_{n=2}^{\infty}\big(\alpha n^2+(1-\alpha)n-\beta\big)|a_n|
\leq\frac{3\alpha+1}{4}\sum_{n=2}^{\infty}n^2|a_n|\leq 1-\beta .\]
Lemma~\ref{lem3.1} then shows that $f\in\mathcal{R}(\alpha, \beta)$.

Now, let   $ \alpha+\beta<1$ and   the function $f$ satisfy
$\sum_{n=2}^{\infty}n^2|a_n|\leq 4(1-\beta)/(2\alpha+2-\beta)$. In
this case, the inequality
\[ 4\left(\alpha
n^2+(1-\alpha)n-\beta\right)\leq n^2(2\alpha+2-\beta) \quad (n\geq
2)
\]  shows that
\[\sum_{n=2}^{\infty}\big(\alpha n^2+(1-\alpha)n-\beta\big)|a_n|
\leq\frac{1}{4}\sum_{n=2}^{\infty}n^2(2\alpha+2-\beta)|a_n|\leq
1-\beta,\] and hence, by Lemma~\ref{lem3.1},
$f\in\mathcal{R}(\alpha, \beta)$. The function
$f_0\in\mathcal{R}(\alpha, \beta)$ given by
\[f_0(z)=z-\frac{1-\beta}{2\alpha+2-\beta}z^2\] shows that  the result is  sharp.
\end{proof}

\section{Functions with negative coefficients}\noindent
In this section, certain classes of functions with negative
coefficients are investigated. The class of functions with negative
coefficients, denoted by $\mathcal{T}$,  consists of the functions
$f$ of the form
\begin{equation}\label{sec3eq1}
f(z)=z-\sum_{n=2}^{\infty}a_nz^n\quad ( a_n\geq 0).
\end{equation}
Denote by $\mathcal{TS}^*(\alpha)$, $\mathcal{TS}^*_\alpha$ and
$\mathcal{TC}(\alpha)$, and  $\mathcal{TC}_\alpha$ the respective
subclasses of functions with negative coefficients in
$\mathcal{S}^*(\alpha)$, $\mathcal{S}^*_\alpha$ and
$\mathcal{C}_\alpha$ and $\mathcal{C}(\alpha)$. For  starlike and
convex functions functions with negative coefficients,  Silverman
\cite{silver} proved the following theorem.

\begin{theorem}\label{A} Let $\alpha\in[0,1)$. If $f\in \mathcal{T}$ is given by
\eqref{sec3eq1}, then \[ f\in \mathcal{TS}^*(\alpha)
\Longleftrightarrow f\in \mathcal{TS}^*_\alpha \Longleftrightarrow
\sum_{n=2}^{\infty}(n-\alpha)a_n\leq 1-\alpha,\] and
\[ f\in \mathcal{TC}(\alpha)\Longleftrightarrow f\in \mathcal{TC}_\alpha
\Longleftrightarrow \sum_{n=2}^{\infty}n(n-\alpha)a_n\leq
1-\alpha.\]
\end{theorem}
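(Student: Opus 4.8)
The plan is to prove each of the two displayed chains by chasing a cycle of three implications, leaning almost entirely on Theorem~\ref{suffi} together with the elementary inclusions $\mathcal{S}^*_\alpha\subseteq\mathcal{S}^*(\alpha)$ and $\mathcal{C}_\alpha\subseteq\mathcal{C}(\alpha)$ already recorded in the introduction. I would treat the starlike chain first. If $f\in\mathcal{T}$ is written as in \eqref{sec3eq1}, then $|a_n|=a_n$, so the inequality $\sum_{n=2}^{\infty}(n-\alpha)a_n\leq 1-\alpha$ is precisely hypothesis \eqref{suffi1}; Theorem~\ref{suffi}(1) then gives $f\in\mathcal{S}^*_\alpha$, and since $f$ retains the form \eqref{sec3eq1} this says $f\in\mathcal{TS}^*_\alpha$. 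Next, $\mathcal{TS}^*_\alpha\subseteq\mathcal{TS}^*(\alpha)$ because $|zf'(z)/f(z)-1|<1-\alpha$ forces $\RE(zf'(z)/f(z))>\alpha$. Finally, if $f\in\mathcal{TS}^*(\alpha)$, then in the representation \eqref{f(z)} its coefficients are $-a_n\leq 0$, so the necessity clause of Theorem~\ref{suffi}(1) applies and returns $\sum_{n=2}^{\infty}(n-\alpha)a_n\leq 1-\alpha$. These three implications run around the three conditions, establishing their equivalence.

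The convex chain is handled the same way, with Theorem~\ref{suffi}(2) in place of Theorem~\ref{suffi}(1): the inequality $\sum_{n=2}^{\infty}n(n-\alpha)a_n\leq 1-\alpha$ is hypothesis \eqref{suffi2}, hence yields $f\in\mathcal{C}_\alpha$ and so $f\in\mathcal{TC}_\alpha$; the inclusion $\mathcal{TC}_\alpha\subseteq\mathcal{TC}(\alpha)$ is immediate from $|zf''(z)/f'(z)|<1-\alpha\Rightarrow\RE(1+zf''(z)/f'(z))>\alpha$; and the necessity clause of Theorem~\ref{suffi}(2) closes the loop when $f\in\mathcal{TC}(\alpha)$. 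If one instead wants a self-contained argument for the one step that is not a direct citation, namely the necessity of the coefficient bound, it can be obtained by restricting to real $z=r\in(0,1)$: for $f\in\mathcal{TS}^*(\alpha)$ the quotient $zf'(z)/f(z)$ at $z=r$ equals $(1-\sum_{n\geq 2}na_nr^{n-1})/(1-\sum_{n\geq 2}a_nr^{n-1})$, which is real, so $\RE(zf'(z)/f(z))>\alpha$ forces $\sum_{n\geq 2}(n-\alpha)a_nr^{n-1}<1-\alpha$; letting $r\to 1^-$ gives $\sum_{n\geq 2}(n-\alpha)a_n\leq 1-\alpha$, and the convex case follows identically via $f\in\mathcal{C}_\alpha\iff zf'\in\mathcal{S}^*_\alpha$.

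I do not anticipate a genuine obstacle here; the statement is essentially bookkeeping on top of Theorem~\ref{suffi} plus the two inclusion relations. The only points that need a moment's care in the self-contained version are the positivity of the denominator $1-\sum_{n\geq 2}a_nr^{n-1}$ on $(0,1)$ — which holds because $f$, being starlike of nonnegative order, is univalent, so $f(r)=r(1-\sum_{n\geq 2}a_nr^{n-1})$ cannot vanish for $0<r<1$, while the bracket equals $1$ at $r=0$ and varies continuously — and the passage $r\to 1^-$, which is legitimate since every term $(n-\alpha)a_nr^{n-1}$ is nonnegative (monotone convergence, or simply truncate the series before taking the limit).
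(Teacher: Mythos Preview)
Your proof is correct, but note that the paper does not actually prove this statement: Theorem~\ref{A} is stated as a cited result due to Silverman~\cite{silver}, with no argument given. What you have written is essentially Silverman's original proof, reorganized as a three-cycle and phrased in terms of the paper's Theorem~\ref{suffi} (which is itself the same Silverman/Merkes--Robertson--Scott result); the self-contained verification of the necessity step via $z=r\to 1^-$ is exactly the argument in~\cite{silver}. So there is nothing to compare against beyond observing that your derivation matches the source the paper cites.
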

 For  functions with
negative coefficients, the next theorem  proves the equivalence of
the inequalities $\sum_{n=2}^{\infty}n(n-1)a_n\leq \beta$ and
$|f''(z)|<\beta$.

\begin{theorem}Let $\beta>0$. If the function  $f\in \mathcal{T}$ is given by
\eqref{sec3eq1}, then \[ |f''(z)|\leq\beta\Longleftrightarrow
\sum_{n=2}^{\infty}n(n-1)a_n\leq \beta.\]
\end{theorem}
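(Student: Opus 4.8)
The plan is to prove the two implications separately, with the reverse direction (coefficient bound $\Rightarrow$ analytic bound) being an immediate estimate and the forward direction requiring only a limiting argument along the real axis; here ``$|f''(z)|\leq\beta$'' is understood to hold for all $z\in\mathbb{D}$.

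First I would dispose of the implication $\sum_{n=2}^{\infty}n(n-1)a_n\leq\beta\Rightarrow|f''(z)|\leq\beta$. Since $f(z)=z-\sum_{n=2}^{\infty}a_nz^n$, differentiating termwise twice gives $f''(z)=-\sum_{n=2}^{\infty}n(n-1)a_nz^{n-2}$ for every $z\in\mathbb{D}$, so by the triangle inequality
\[
|f''(z)|\leq\sum_{n=2}^{\infty}n(n-1)a_n|z|^{n-2}\leq\sum_{n=2}^{\infty}n(n-1)a_n\leq\beta,
\]
the middle step using $|z|<1$. This direction is routine.

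For the converse I would evaluate $f''$ along the positive real axis. Fix $r$ with $0\leq r<1$. Because $a_n\geq0$ for all $n$, every term of $\sum_{n=2}^{\infty}n(n-1)a_nr^{n-2}$ is nonnegative, so $-f''(r)$ equals this sum and hence $|f''(r)|=\sum_{n=2}^{\infty}n(n-1)a_nr^{n-2}$. The hypothesis then yields
\[
\sum_{n=2}^{\infty}n(n-1)a_nr^{n-2}\leq\beta\qquad(0\leq r<1).
\]
Letting $r\to1^-$ gives $\sum_{n=2}^{\infty}n(n-1)a_n\leq\beta$, which completes the proof. The one place meriting care is this passage to the limit: rather than interchanging the limit with an infinite sum, one first fixes $N$ and notes $\sum_{n=2}^{N}n(n-1)a_n=\lim_{r\to1^-}\sum_{n=2}^{N}n(n-1)a_nr^{n-2}\leq\beta$, then lets $N\to\infty$ (equivalently, invokes the monotone convergence theorem, each summand $n(n-1)a_nr^{n-2}$ being nondecreasing in $r$). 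Beyond this minor point, the argument poses no genuine obstacle.
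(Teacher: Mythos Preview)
Your proof is correct and follows essentially the same approach as the paper: the forward direction is the same direct estimate, and the converse is obtained by letting $z\to1^-$ along the real axis. Your added justification of the limit via truncation (or monotone convergence) is a welcome bit of rigor that the paper leaves implicit.
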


\begin{proof}If $f$ satisfies the coefficient inequality  $\sum_{n=2}^{\infty}n(n-1)a_n\leq \beta$, then
\[ |f''(z)|\leq \sum_{n=2}^\infty n(n-1) a_n  |z|^{n-2}
\leq \sum_{n=2}^\infty n(n-1) a_n   \leq\beta .\]

The converse follows, by allowing $z\rightarrow1^-$, in
\[|f''(z)|=\left|\sum_{n=2}^{\infty}n(n-1)a_nz^{n-2}\right|\leq\beta.\qedhere\]
\end{proof}

\begin{remark}It is well known that if the function $f\in\mathcal{A}$ satisfies the inequality
$|f''(z)|\leq \beta $ for $0<\beta\leq1$, then $f\in \mathcal{S}^*$
and if $|f''(z)|\leq \beta $ for $0<\beta\leq1/2$, then $f\in
\mathcal{C}$ \cite[Theorem 1, p.1861]{vsingh}. \end{remark}

\begin{theorem} If the function $f\in\mathcal{TC}(\alpha)$, $0\leq \alpha<1$,
then the following holds:
\begin{enumerate}
\item[(1)] The inequality $\sum_{n=2}^{\infty}n
a_n\leq(1-\alpha)/(2-\alpha)$ holds and the bound
$(1-\alpha)/(2-\alpha)$ is sharp.

\item[(2)]  The inequality $\sum_{n=2}^{\infty}n (n-1)a_n\leq1-\alpha$ holds.

\item[(3)]  The inequality $\sum_{n=2}^{\infty} (n-1)a_n\leq(1-\alpha)/2(2-\alpha)$ holds and
the bound $(1-\alpha)/2(2-\alpha)$ is sharp.

\item[(4)] The inequality $\sum_{n=2}^{\infty}n^2a_n\leq 2(1-\alpha)/(2-\alpha)$ holds and
the bound $2(1-\alpha)/(2-\alpha)$ is sharp.
\end{enumerate}

\end{theorem}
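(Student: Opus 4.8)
The plan is to reduce every assertion to the coefficient characterization in Theorem~\ref{A}. Since $f\in\mathcal{TC}(\alpha)$, that theorem gives
\[\sum_{n=2}^{\infty} n(n-\alpha)a_n \leq 1-\alpha,\qquad a_n\geq 0,\]
and each of the four bounds will follow by comparing the weight $n(n-\alpha)$ with the relevant weight term by term for $n\geq 2$; in three of the four cases sharpness will be exhibited by a single extremal quadratic.

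First I would record the elementary inequalities valid for every integer $n\geq 2$ and every $\alpha\in[0,1)$:
\[(2-\alpha)n \leq n(n-\alpha),\qquad n(n-1)\leq n(n-\alpha),\qquad \tfrac{2-\alpha}{2}\,n^2 \leq n(n-\alpha),\qquad 2(2-\alpha)(n-1)\leq n(n-\alpha).\]
The first follows from $n-\alpha\geq 2-\alpha$, the second from $\alpha\geq 0$, and the third from $2(n-\alpha)\geq(2-\alpha)n$, i.e. $\alpha(n-2)\geq 0$. The fourth inequality, which underlies part (3), is the only one needing a moment's thought: it rests on the factorization
\[n(n-\alpha)-2(2-\alpha)(n-1) = (n-2)(n-2+\alpha)\geq 0\qquad(n\geq 2).\]
Multiplying each inequality by $a_n\geq 0$, summing over $n\geq 2$, and dividing by the constant coefficient on the left then delivers parts (1)--(4) directly from $\sum_{n\geq 2} n(n-\alpha)a_n\leq 1-\alpha$.

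For sharpness I would test $f_0(z)=z-\frac{1-\alpha}{2(2-\alpha)}z^2$. Here $a_2=\frac{1-\alpha}{2(2-\alpha)}$, so $2(2-\alpha)a_2=1-\alpha$ and Theorem~\ref{A} gives $f_0\in\mathcal{TC}(\alpha)$; moreover $\sum n a_n=2a_2=(1-\alpha)/(2-\alpha)$, $\sum(n-1)a_n=a_2=(1-\alpha)/(2(2-\alpha))$, and $\sum n^2 a_n=4a_2=2(1-\alpha)/(2-\alpha)$, so the bounds in (1), (3), (4) are attained. No sharpness is asserted in (2), consistent with the fact that $n(n-1)<n(n-\alpha)$ strictly for $n\geq 2$ whenever $\alpha>0$.

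I do not expect a genuine obstacle: the argument is a sequence of term-by-term weight comparisons. The only step requiring care is the comparison in part (3), where one must notice the factorization $(n-2)(n-2+\alpha)$, together with the bookkeeping check that the single quadratic $f_0$ simultaneously certifies sharpness of the three extremal bounds.
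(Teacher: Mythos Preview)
Your proposal is correct and follows essentially the same approach as the paper: both reduce to Theorem~\ref{A} via the same four termwise weight comparisons, and both use the same extremal function $f_0(z)=z-\tfrac{1-\alpha}{2(2-\alpha)}z^2$ for sharpness. Your version is in fact more detailed, supplying the factorization $(n-2)(n-2+\alpha)$ that justifies the inequality behind part~(3) and explicitly verifying the sharpness computations, whereas the paper simply lists the four inequalities without proof.
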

\begin{proof}The results follow respectively from Theorem~\ref{A} and the simple inequalities
$2-\alpha\leq n-\alpha$, $n-1\leq n-\alpha$, $2(2-\alpha)(n-1)\leq
n(n-\alpha)$, and $n^2(2-\alpha) \leq 2 n(n-\alpha)$ satisfied for
$n\geq 2$. The  sharpness follows by considering the function $f_0$
given by
\[f_0(z)=z-\frac{1}{2}\frac{1-\alpha}{2-\alpha}z^2.\qedhere\]
\end{proof}

Alexander theorem  between $\TC(\alpha)$ and $\TS(\alpha)$
immediately yields the following corollary.

\begin{corollary}If the function $f\in\mathcal{TS}^*(\alpha)$, $0\leq \alpha<1$,
then the following holds.
\begin{enumerate}
\item[(1)] The inequality $\sum_{n=2}^{\infty}  a_n\leq(1-\alpha)/(2-\alpha)$ holds and the bound
$(1-\alpha)/(2-\alpha)$ is sharp.

\item[(2)] The inequality $\sum_{n=2}^{\infty} (n-1)a_n\leq
1-\alpha$ holds.

\item[(3)]  The inequality $\sum_{n=2}^{\infty}n
a_n\leq2(1-\alpha)/(2-\alpha)$ holds and the bound
$2(1-\alpha)/(2-\alpha)$ is sharp.
\end{enumerate}
\end{corollary}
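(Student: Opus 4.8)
The plan is to read the three inequalities straight off Silverman's characterization in Theorem~\ref{A}. Since $f\in\mathcal{TS}^*(\alpha)$ is equivalent to $\sum_{n=2}^{\infty}(n-\alpha)a_n\leq 1-\alpha$ with all $a_n\geq 0$, it suffices, for each target sum $\sum_{n=2}^\infty w_n a_n$, to find a constant $c=c(\alpha)$ with $w_n\leq c\,(n-\alpha)$ for every $n\geq 2$; multiplying by $a_n$ and summing then gives $\sum w_n a_n\leq c(1-\alpha)$. Concretely: for (1) the weight is $w_n=1$ and $1\leq(n-\alpha)/(2-\alpha)$ because $2-\alpha\leq n-\alpha$ for $n\geq 2$; for (2) the weight is $w_n=n-1$ and $n-1\leq n-\alpha$ since $\alpha<1$; for (3) the weight is $w_n=n$ and $n\leq 2(n-\alpha)/(2-\alpha)$, which rearranges to $n\alpha\geq 2\alpha$, true for $n\geq 2$. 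Each is a one-line verification.

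For the sharpness assertions in (1) and (3) I would exhibit a single extremal function and check it meets both bounds. Take $f_0(z)=z-\frac{1-\alpha}{2-\alpha}z^2$. Then $\sum_{n\geq2}(n-\alpha)a_n=(2-\alpha)\cdot\frac{1-\alpha}{2-\alpha}=1-\alpha$, so $f_0\in\mathcal{TS}^*(\alpha)$ by Theorem~\ref{A}; and for this function $\sum a_n=\frac{1-\alpha}{2-\alpha}$ and $\sum n a_n=\frac{2(1-\alpha)}{2-\alpha}$, which are precisely the claimed bounds. Hence neither bound can be lowered.

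An alternative route, presumably what the phrase ``Alexander theorem between $\mathcal{TC}(\alpha)$ and $\mathcal{TS}^*(\alpha)$'' points to, is to deduce the corollary from the preceding theorem on $\mathcal{TC}(\alpha)$ rather than from Theorem~\ref{A} directly: if $f(z)=z-\sum a_n z^n\in\mathcal{TS}^*(\alpha)$, set $g(z)=z-\sum (a_n/n)z^n$, so that $zg'(z)=f(z)$ and hence $g\in\mathcal{TC}(\alpha)$. Applying parts (1), (2), (4) of that theorem to $g$, whose $n$th coefficient is $a_n/n$, converts the sums $\sum n(a_n/n)$, $\sum n(n-1)(a_n/n)$, $\sum n^2(a_n/n)$ into $\sum a_n$, $\sum(n-1)a_n$, $\sum n a_n$ with the three stated bounds, and the extremal $g_0(z)=z-\frac12\frac{1-\alpha}{2-\alpha}z^2$ transports to $f_0(z)=zg_0'(z)=z-\frac{1-\alpha}{2-\alpha}z^2$.

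The only obstacle is essentially bookkeeping: one must check the three elementary coefficient comparisons hold at $n=2$ with equality in the sharp cases, which is exactly why the same $f_0$ serves as extremal for both (1) and (3). Given that, either route closes immediately; I would present the direct argument from Theorem~\ref{A}, since it avoids setting up the Alexander correspondence for the negative-coefficient classes.
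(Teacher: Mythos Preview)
Your proposal is correct on both routes. The paper itself takes precisely the Alexander-correspondence route you describe as the alternative: it simply remarks that the Alexander relation between $\mathcal{TC}(\alpha)$ and $\mathcal{TS}^*(\alpha)$ transports parts (1), (2), (4) of the preceding theorem on $\mathcal{TC}(\alpha)$ into parts (1), (2), (3) of the corollary, with no further details given. Your preferred direct argument from Theorem~\ref{A}, comparing weights $w_n$ against $c(n-\alpha)$, is a perfectly good substitute---indeed it is exactly how the preceding theorem on $\mathcal{TC}(\alpha)$ was proved in the paper (via the elementary inequalities $2-\alpha\leq n-\alpha$, $n-1\leq n-\alpha$, etc.), so the two approaches differ only in whether one applies those comparisons before or after passing through the Alexander map. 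Your sharpness check with $f_0(z)=z-\frac{1-\alpha}{2-\alpha}z^2$ is also correct and matches the extremal function implicit in the paper.
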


In the remaining part of this section, the properties of functions
with negative coefficients  belonging to the class
$\mathcal{R}(\alpha, \beta)$ are investigated. The class of all
functions  with negative coefficients  belonging to the class
$\mathcal{R}(\alpha, \beta)$ is denoted in the sequel by
$\mathcal{TR}(\alpha, \beta)$. To begin with, the following lemma is
needed.

\begin{lemma}\cite[Theorem 8, p.414]{liu}\label{LemaTR} If  $\beta<1$, $\alpha\in \mathbb{R}$ . If
 $f\in \mathcal{T}$, then
\[ f\in\mathcal{TR}(\alpha, \beta) \Longleftrightarrow \sum_{n=2}^{\infty}\big(\alpha
n^2+(1-\alpha)n-\beta\big)a_n\leq1-\beta .
\]
\end{lemma}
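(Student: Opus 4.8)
The plan is to prove the biconditional in Lemma~\ref{LemaTR} by establishing the two implications separately, with the ``$\Longleftarrow$'' direction being an immediate consequence of Lemma~\ref{lem3.1}. Indeed, if $f\in\mathcal{T}$ satisfies $\sum_{n=2}^{\infty}\big(\alpha n^2+(1-\alpha)n-\beta\big)a_n\leq 1-\beta$, then in particular $\sum_{n=2}^{\infty}\big(\alpha n^2+(1-\alpha)n-\beta\big)|a_n|\leq 1-\beta$ since $|a_n|=a_n$, so Lemma~\ref{lem3.1} gives $f\in\mathcal{R}(\alpha,\beta)$; combined with $f\in\mathcal{T}$ this yields $f\in\mathcal{TR}(\alpha,\beta)$.

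For the forward implication ``$\Longrightarrow$'', suppose $f\in\mathcal{TR}(\alpha,\beta)$, so that $f(z)=z-\sum_{n=2}^{\infty}a_nz^n$ with $a_n\geq 0$ and
\[
\RE\left(\frac{zf'(z)}{f(z)}\Big(\alpha\frac{zf''(z)}{f'(z)}+1\Big)\right)>\beta
\qquad(z\in\mathbb{D}).
\]
The standard device here is to rewrite the left-hand expression as $\RE\big(zf'(z)(\alpha zf''(z)+f'(z))/(f'(z)f(z))\big)$ and evaluate it along the real segment $z=r\in(0,1)$, where every power $r^{n-1}$, $r^{n-2}$ is positive and all the coefficients $-a_n$ have a fixed sign. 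Writing $P(r):=rf'(r)(\alpha rf''(r)+f'(r))$ and $Q(r):=f'(r)f(r)$, both are real for $r\in(0,1)$, and one checks (since $f(0)=0$, $f'(0)=1$) that $Q(r)>0$ for $r$ near $0$; the inequality $P(r)/Q(r)>\beta$ then becomes $P(r)-\beta Q(r)>0$ for such $r$. The key computation is to expand $P(r)-\beta Q(r)$ as a power series in $r$ and identify the coefficient structure: after multiplying out, the ``main term'' is $(1-\beta)r^2$ and the remaining terms are $-\sum_{n=2}^{\infty}\big(\alpha n^2+(1-\alpha)n-\beta\big)a_n r^{n+1}+(\text{higher-order, quadratic-in-}a_n\ \text{terms})$. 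The crucial observation — exactly the reason negative coefficients make the problem tractable — is that every cross term $a_ma_n$ appears with a coefficient of a controllable sign, so that dividing by $r^2$ and letting $r\to 1^{-}$ forces
\[
1-\beta-\sum_{n=2}^{\infty}\big(\alpha n^2+(1-\alpha)n-\beta\big)a_n\ \geq\ 0,
\]
which is the desired inequality. (This mirrors the proof technique of Theorem~\ref{A} and the cited result of Liu; one would simply cite \cite[Theorem 8, p.~414]{liu} and sketch this limiting argument.)

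The main obstacle is the bookkeeping in the series expansion of $P(r)-\beta Q(r)$: the product $zf'(z)\cdot\alpha zf''(z)$ and the product $f'(z)f(z)$ each generate Cauchy-type convolutions $\sum_{m,n}a_ma_n(\cdots)$ of the tail coefficients, and one must verify that, for $r\in(0,1)$ sufficiently close to $1$, these quadratic contributions do not swamp the linear term $-\sum\big(\alpha n^2+(1-\alpha)n-\beta\big)a_n r^{n+1}$ nor reverse the sign of the inequality in the limit. The clean way to handle this is to observe that each $a_n\in[0,1]$ (which follows because $f$ is univalent, or directly from $\mathcal{R}(\alpha,\beta)\subset\mathcal{S}^*$ for the relevant parameter range via \cite{owa2}, or even more simply from the $\Longleftarrow$ direction once the inequality is known for truncations), whence $a_ma_n\leq\min(a_m,a_n)$ and the quadratic terms are absorbed; alternatively, one tests $f$ against the functions $z\mapsto z-a_Nz^N$ obtained by retaining a single coefficient, for which the computation collapses to a single-variable inequality, and then appeals to a normal-families/limit argument to pass to the full series. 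Either route reduces the problem to the elementary one-term estimate $\alpha N^2+(1-\alpha)N-\beta\leq(1-\beta)/a_N$, which is precisely the claimed bound.
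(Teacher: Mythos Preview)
The paper does not supply a proof of this lemma; it is simply quoted from \cite[Theorem~8]{liu}. So there is nothing in the paper to compare against, and I evaluate your argument on its own merits.

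Your ``$\Longleftarrow$'' direction via Lemma~\ref{lem3.1} is correct. For ``$\Longrightarrow$'' your overall plan (evaluate along the real segment $z=r\in(0,1)$ and let $r\to1^{-}$) is the right one, but you have missed the algebraic simplification that makes it work cleanly. Observe that
\[
\frac{zf'(z)}{f(z)}\Big(\alpha\frac{zf''(z)}{f'(z)}+1\Big)
=\frac{zf'(z)}{f(z)}\cdot\frac{\alpha zf''(z)+f'(z)}{f'(z)}
=\frac{\alpha z^{2}f''(z)+zf'(z)}{f(z)};
\]
the factor $f'(z)$ cancels. For $f(z)=z-\sum_{n\ge2}a_nz^{n}$ this gives
\[
\frac{\alpha z^{2}f''(z)+zf'(z)}{f(z)}
=\frac{1-\sum_{n\ge2}\big(\alpha n^{2}+(1-\alpha)n\big)a_nz^{n-1}}{1-\sum_{n\ge2}a_nz^{n-1}},
\]
a ratio of two series each \emph{linear} in the $a_n$. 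For real $z=r\in(0,1)$ the membership $f\in\mathcal{TR}(\alpha,\beta)$ (which in particular forces $f(z)\neq0$ on $\mathbb{D}\setminus\{0\}$, hence the denominator is positive) yields
\[
1-\beta\ >\ \sum_{n\ge2}\big(\alpha n^{2}+(1-\alpha)n-\beta\big)a_n\,r^{n-1},
\]
and letting $r\to1^{-}$ gives the claimed inequality.

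Because of this cancellation, no ``Cauchy-type convolutions'' or quadratic cross terms $a_ma_n$ ever arise; your concerns about them, and the proposed workarounds (bounding $a_n\le1$, testing single-coefficient functions and invoking normal families), are self-inflicted complications. As written, those workarounds are too vague to constitute a proof: the quadratic terms in your $P(r)-\beta Q(r)$ do not have an evident sign, and the one-coefficient reduction does not by itself imply the inequality for the full series. The fix is simply to cancel $f'$ first.
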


\begin{corollary} \label{C6} If $f\in\mathcal{TR}(\alpha, \beta)$ with $\beta<1$, $\alpha>0$, then
the following holds:
\begin{enumerate}
\item[(1)]
The function $f\in \mathcal{TS^*}_{\eta}$ for  $\eta\leq
(2\alpha+\beta)/(2\alpha+1)$ and the bound
$(2\alpha+\beta)/(2\alpha+1)$ is sharp.

\item[(2)] The function  $f\in \mathcal{TC}_{\eta}$ for $\eta\leq (\alpha-1+\beta)/\alpha$.
\end{enumerate}
\end{corollary}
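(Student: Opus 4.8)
The plan is to mirror exactly the proof of Theorem~\ref{th6}, substituting the two-way characterization of Lemma~\ref{LemaTR} for the one-way sufficient condition of Lemma~\ref{lem3.1} and Silverman's equivalence (Theorem~\ref{A}) for the sufficient conditions of Theorem~\ref{suffi}. Since $f\in\mathcal{TR}(\alpha,\beta)$, Lemma~\ref{LemaTR} gives the coefficient inequality $\sum_{n=2}^{\infty}\bigl(\alpha n^2+(1-\alpha)n-\beta\bigr)a_n\leq 1-\beta$, so the hypothesis \eqref{th3.2e1} is available with each $|a_n|$ replaced by $a_n$. From here the arithmetic is identical to that in Theorem~\ref{th6}.

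For part (1): as in Theorem~\ref{th6}(1), set $\eta_0:=(2\alpha+\beta)/(2\alpha+1)$; it suffices to show $f\in\mathcal{TS}^*_{\eta_0}$, since $\eta\leq\eta_0$ implies $\mathcal{TS}^*_{\eta_0}\subseteq\mathcal{TS}^*_\eta$. Using $(2\alpha+1)n-2\alpha\leq\alpha n^2+(1-\alpha)n$ for $n\geq 2$ and $\alpha\geq 0$, one gets
\[
\sum_{n=2}^{\infty}(n-\eta_0)a_n
=\sum_{n=2}^{\infty}\frac{(2\alpha+1)n-2\alpha-\beta}{2\alpha+1}\,a_n
\leq\sum_{n=2}^{\infty}\frac{\alpha n^2+(1-\alpha)n-\beta}{2\alpha+1}\,a_n
\leq\frac{1-\beta}{2\alpha+1}=1-\eta_0,
\]
so Theorem~\ref{A} yields $f\in\mathcal{TS}^*_{\eta_0}$. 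Sharpness is witnessed by $f_0(z)=z-\dfrac{1-\beta}{2\alpha+2-\beta}z^2$, which lies in $\mathcal{TR}(\alpha,\beta)$ by Lemma~\ref{LemaTR} and for which $zf_0'(z)/f_0(z)-1$ attains modulus $1-\eta_0$ in the limit $z\to 1^-$, exactly as in Theorem~\ref{th6}(1).

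For part (2): set $\eta_0:=(\alpha-1+\beta)/\alpha$; it suffices to show $f\in\mathcal{TC}_{\eta_0}$. From $\alpha n^2+(1-\alpha)n-n\beta\leq\alpha n^2+(1-\alpha)n-\beta$ for $n\geq 2$ (valid since $\beta\geq 0$), one obtains
\[
\sum_{n=2}^{\infty}n(n-\eta_0)a_n
=\frac{1}{\alpha}\sum_{n=2}^{\infty}\bigl(\alpha n^2+(1-\alpha)n-n\beta\bigr)a_n
\leq\frac{1}{\alpha}\sum_{n=2}^{\infty}\bigl(\alpha n^2+(1-\alpha)n-\beta\bigr)a_n
\leq\frac{1-\beta}{\alpha}=1-\eta_0,
\]
and Theorem~\ref{A} gives $f\in\mathcal{TC}_{\eta_0}\subseteq\mathcal{TC}_\eta$. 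There is essentially no obstacle here: the only point requiring a moment's care is checking that the two elementary polynomial inequalities in $n$ hold on $\{n\geq 2\}$ under the stated sign conditions on $\alpha$ and $\beta$ (and, for sharpness in part (1), verifying that the extremal $f_0$ genuinely saturates Lemma~\ref{LemaTR} and drives $\eta_0$ to the boundary). Because Lemma~\ref{LemaTR} is an equivalence rather than a one-sided implication, no extra work beyond Theorem~\ref{th6} is needed.
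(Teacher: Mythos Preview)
Your argument is correct and is essentially the paper's own proof with the details unpacked: the paper simply says the result follows from Lemma~\ref{LemaTR} and Theorem~\ref{th6}, since Lemma~\ref{LemaTR} supplies the coefficient inequality \eqref{th3.2e1} (with $|a_n|=a_n$) and then Theorem~\ref{th6} applies verbatim. Your version spells out the intermediate inequalities and invokes Theorem~\ref{A} in place of Theorem~\ref{suffi}, which amounts to the same thing for functions in $\mathcal{T}$.
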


\begin{proof} The result follows from Lemma \ref{LemaTR} and Theorem \ref{th6}.\end{proof}

 The next result shows that
$\mathcal{TC}\big( (2\alpha+3\beta-2)/(2\alpha+\beta) \big) \subset
\mathcal{TR}(\alpha,\beta)$ for $0\leq \beta<1$, $\alpha\in
\mathbb{R}$.

\begin{theorem}Let $0\leq \beta<1$, and $\alpha>0$. If \,$\eta\geq
(2\alpha+3\beta-2)/(2\alpha+\beta)$, then
$\mathcal{TC}(\eta)\subseteq  \mathcal{TR}(\alpha,\beta)$.
\end{theorem}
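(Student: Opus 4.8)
The plan is to reduce the claimed inclusion to the two coefficient characterizations already available, namely Theorem~\ref{A} for $\mathcal{TC}(\eta)$ and Lemma~\ref{LemaTR} for $\mathcal{TR}(\alpha,\beta)$. Suppose $f\in\mathcal{T}$ given by \eqref{sec3eq1} lies in $\mathcal{TC}(\eta)$ with $\eta\geq(2\alpha+3\beta-2)/(2\alpha+\beta)$. By Theorem~\ref{A} this is equivalent to $\sum_{n=2}^\infty n(n-\eta)a_n\leq 1-\eta$. Since $\mathcal{TC}(\eta)\subseteq\mathcal{TC}(\eta_0)$ for $\eta\geq\eta_0$ (a larger order is a stronger condition), it suffices to prove the inclusion for the extremal value $\eta_0:=(2\alpha+3\beta-2)/(2\alpha+\beta)$; I would state this monotonicity reduction first. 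The goal is then to show $\sum_{n=2}^\infty(\alpha n^2+(1-\alpha)n-\beta)a_n\leq 1-\beta$, which by Lemma~\ref{LemaTR} gives $f\in\mathcal{TR}(\alpha,\beta)$.

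The heart of the argument is a termwise comparison: I would show that for each $n\geq 2$,
\[
\frac{\alpha n^2+(1-\alpha)n-\beta}{1-\beta}\;\leq\;\frac{n(n-\eta_0)}{1-\eta_0},
\]
so that the desired inequality follows from the hypothesis $\sum n(n-\eta_0)a_n\leq 1-\eta_0$ by multiplying each $a_n\geq 0$ by the ratio and summing. Equivalently, after clearing the (positive) denominators $1-\beta$ and $1-\eta_0=(2-2\alpha-\beta+2)/(2\alpha+\beta)$—one should record that $1-\eta_0>0$ precisely because $\beta<1$ and $\alpha>0$—this becomes a polynomial inequality in $n$ of degree two with nonnegative leading behaviour, which I would verify holds for all $n\geq 2$; checking $n=2$ should give equality (that is how $\eta_0$ was chosen), and the derivative/coefficient comparison handles $n\geq 3$. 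This is the same pattern used throughout Section~2 and Section~3 of the paper, e.g.\ in the proofs of Theorems~\ref{th1}, \ref{th6}, and \ref{th7}.

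The main obstacle I anticipate is purely computational bookkeeping: one must be careful about the sign of $2\alpha+\beta$ (it is positive since $\alpha>0$ and $\beta\geq 0$) and of $1-\eta_0$, because the whole termwise estimate is an inequality between two fractions and the direction depends on those signs. A secondary subtlety is that $\eta_0$ can be negative (for small $\alpha,\beta$), so $\mathcal{TC}(\eta_0)$ must be interpreted via its coefficient characterization rather than as a classical order-of-convexity class; the statement of Theorem~\ref{A} is phrased for $\alpha\in[0,1)$, so one should either note that the coefficient inequality $\sum n(n-\eta_0)a_n\leq 1-\eta_0$ still makes sense and still implies the target estimate, or restrict to the range of parameters where $\eta_0\in[0,1)$. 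Modulo that care, the proof is a short two-line reduction plus one elementary polynomial inequality, and I would present it as: invoke monotonicity in $\eta$, apply Theorem~\ref{A} at $\eta_0$, establish the termwise ratio bound, sum, and conclude via Lemma~\ref{LemaTR}.
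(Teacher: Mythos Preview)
Your proposal is correct and follows essentially the same route as the paper: reduce to $\eta_0$ by monotonicity, establish the termwise polynomial inequality (which the paper writes as $2(\alpha n^2+(1-\alpha)n-\beta)\leq n\big((2\alpha+\beta)n-(2\alpha+3\beta-2)\big)$ for $n\ge 2$), sum, and conclude via Lemma~\ref{LemaTR}. One small slip: your parenthetical value of $1-\eta_0$ is miscomputed---it should be $2(1-\beta)/(2\alpha+\beta)$---and, as you note, the paper likewise does not address the case $\eta_0<0$ explicitly.
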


\begin{proof}For $\eta_0\leq \eta$,
$\mathcal{TC}(\eta)\subset\mathcal{TC}(\eta_0)$ and therefore it is
enough to prove $\mathcal{TC}(\eta_0)\subseteq
\mathcal{TR}(\alpha,\beta)$ where
$\eta_0=(2\alpha+3\beta-2)/(2\alpha+\beta)$.  For $n\geq2$, the
inequality \[2\alpha n^2+2(1-\alpha)n-2\beta  \leq
n\big((2\alpha+\beta)n-(2\alpha+3\beta-2)\big) \] holds and
therefore
\begin{align*}
\sum_{n=2}^{\infty}\big(\alpha n^2+(1-\alpha)n-\beta\big) a_n
&\leq\frac{1}{2}\sum_{n=2}^{\infty}
n\big((2\alpha+\beta)n-(2\alpha+3\beta-2)\big)a_n\\
& = \frac{2\alpha+\beta}{2}\sum_{n=2}^\infty n(n-\eta_0)a_n\\
& \leq \frac{2\alpha+\beta}{2}(1-\eta_0)\\
&= 1-\beta,\end{align*}  and, by Lemma~\ref{LemaTR},  $f\in
\mathcal{TR}(\alpha,\beta)$.
\end{proof}
\begin{theorem} Let $\beta<1$, and $\alpha\in \mathbb{R}$. If $f\in\mathcal{TR}(\alpha, \beta)$,
then
\begin{enumerate}
  \item[(1)] $\sum_{n=2}^{\infty}n(n-1)a_n\leq(1-\beta)/\alpha$ when $\alpha>0$.

  \item[(2)]  $\sum_{n=2}^{\infty}(n-1)a_n\leq \eta$ where \[\eta=
  \begin{cases}(1-\beta)/(1-\alpha),\quad &\beta<3\alpha+1,\quad 0\leq\alpha<1\\
(1-\beta)/(2\alpha+2-\beta),\quad &\beta\geq 3\alpha+1, \quad
0\leq\alpha.
\end{cases}\]
The result for $\beta>3\alpha+1$ is sharp.

  \item[(3)] For $0\leq
\alpha\leq 1$, $\sum_{n=2}^{\infty}n^2a_n\leq \eta$ where\[\eta=
  \begin{cases}(1-\beta)/\alpha,\quad &\beta<2(1-\alpha),\alpha>0\\
4(1-\beta)/(2\alpha+2-\beta),\quad &\beta\geq
2(1-\alpha),,\beta\geq0.
  \end{cases}\]
   The result for $\beta>2(1-\alpha)$ is sharp.

  \item[(4)] $\sum_{n=2}^{\infty}na_n\leq
   2(1-\beta)/(2\alpha+2-\beta)$, $\alpha, \beta \geq 0$.
   The result is sharp.
\end{enumerate}
\end{theorem}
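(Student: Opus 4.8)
The plan is to obtain all four inequalities from the coefficient characterisation of $\mathcal{TR}(\alpha,\beta)$ in Lemma~\ref{LemaTR}. Writing $\varphi(n):=\alpha n^{2}+(1-\alpha)n-\beta$, the hypothesis $f\in\mathcal{TR}(\alpha,\beta)$ is, by that lemma, exactly
\[
\sum_{n=2}^{\infty}\varphi(n)\,a_{n}\le 1-\beta .
\]
Each conclusion asks for a bound $\sum_{n=2}^{\infty}w(n)a_{n}\le\eta$ with $w(n)=n(n-1)$ in (1), $w(n)=n-1$ in (2), $w(n)=n^{2}$ in (3), and $w(n)=n$ in (4). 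So in every case it suffices to produce a constant $c=c(\alpha,\beta)$ with $w(n)\le c\,\varphi(n)$ for all integers $n\ge2$; since $a_{n}\ge0$ this gives $\sum w(n)a_{n}\le c\sum\varphi(n)a_{n}\le c(1-\beta)$, which is the asserted bound with $\eta=c(1-\beta)$.

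The substantive point is choosing $c$ optimally, i.e. $c=\sup_{n\ge2}w(n)/\varphi(n)$, and this is what generates the piecewise formulas. Because $n\mapsto w(n)/\varphi(n)$ is rational, a one-variable derivative computation shows its supremum on $[2,\infty)$ is one of only two candidates: the endpoint value $w(2)/\varphi(2)=w(2)/(2\alpha+2-\beta)$, or the ``degree-matched'' value coming from large $n$, namely $1/\alpha$ when $w$ has degree $2$ (parts (1), (3)) and $1/(1-\alpha)$ when $w(n)=n-1$ (part (2)). In (1) the comparison $\alpha\,n(n-1)\le\varphi(n)$ reduces to $0\le n-\beta$, which holds for $n\ge2$ and is strict, so $c=1/\alpha$ works and no sharpness is (or can be) asserted. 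In (4), $\tfrac{d}{dn}\bigl(n/\varphi(n)\bigr)=\bigl(-\alpha n^{2}-\beta\bigr)/\varphi(n)^{2}\le0$ for $\alpha,\beta\ge0$, so the ratio is decreasing and $c=2/(2\alpha+2-\beta)$, which is sharp. In (2) and (3) one simply compares the two candidate constants: $w(2)/(2\alpha+2-\beta)$ equals the degree-matched value exactly when $\beta=3\alpha+1$ (for $w=n-1$) and when $\beta=2(1-\alpha)$ (for $w=n^{2}$) --- precisely the dividing lines in the statement. On the side where the ``$n=2$'' constant is the supremum the bound $w(2)(1-\beta)/(2\alpha+2-\beta)$ is attained and sharp; on the other side the degree-matched constant gives the clean, generally non-tight bound. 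In each regime, verifying $w(n)\le c\,\varphi(n)$ for all $n\ge2$ is an elementary polynomial inequality (for instance $\varphi(n)-(2\alpha+2-\beta)(n-1)=(n-2)\bigl(\alpha(n-1)+\beta-1\bigr)$, and $\varphi(n)/\alpha-n^{2}=\bigl((1-\alpha)n-\beta\bigr)/\alpha$).

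All sharpness claims use the same extremal, $f_{0}(z)=z-\dfrac{1-\beta}{2\alpha+2-\beta}\,z^{2}$: one has $\varphi(2)a_{2}=(2\alpha+2-\beta)\cdot\dfrac{1-\beta}{2\alpha+2-\beta}=1-\beta$, so $f_{0}\in\mathcal{TR}(\alpha,\beta)$ by Lemma~\ref{LemaTR}, and for $f_{0}$ one gets $\sum w(n)a_{n}=w(2)a_{2}=w(2)(1-\beta)/(2\alpha+2-\beta)$, matching the bound in part (4) and in the ``$n=2$'' cases of (2) and (3). I expect the main obstacle to be the bookkeeping of this case analysis rather than any single hard estimate: for each weight one must identify exactly the $(\alpha,\beta)$-region in which the maximiser of $w(n)/\varphi(n)$ sits at $n=2$, treat the degenerate parameter values ($\alpha=0$, $\alpha=1$, and the sign of $\alpha$), and check throughout that $\varphi(n)>0$ on the relevant indices so that the comparison-then-sum step is legitimate.
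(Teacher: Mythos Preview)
Your proposal is correct and follows essentially the same route as the paper: both start from the coefficient characterisation of $\mathcal{TR}(\alpha,\beta)$ in Lemma~\ref{LemaTR} and, for each weight $w(n)$, establish a pointwise inequality $w(n)\le c\,\varphi(n)$ for $n\ge2$ with exactly the constants $c$ you identify, then invoke the same extremal $f_{0}(z)=z-\tfrac{1-\beta}{2\alpha+2-\beta}z^{2}$ for sharpness. Your framing in terms of $\sup_{n\ge2}w(n)/\varphi(n)$ and the explicit factorisation $\varphi(n)-(2\alpha+2-\beta)(n-1)=(n-2)\bigl(\alpha(n-1)+\beta-1\bigr)$ make the origin of the case splits more transparent than in the paper, but the underlying argument is identical.
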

\begin{proof} Since  $f\in\mathcal{TR}(\alpha, \beta)$,  by Lemma~\ref{LemaTR}, the
following inequality holds:
\[ \sum_{n=2}^{\infty}\big(\alpha
n^2+(1-\alpha)n-\beta\big)a_n\leq1-\beta .\] This inequality is used
throughout the proof of this theorem.

(1) Since  \[\alpha n(n-1)\leq\alpha n^2+(1-\alpha)n-\beta\quad
n\geq2,\] it readily follows that
\[\sum_{n=2}^{\infty}  n( n-1) a_n
\leq\sum_{n=2}^{\infty}\frac{\alpha n^2+(1-\alpha)n-\beta}{\alpha
}a_n\leq \frac{1-\beta}{\alpha}.\]

(2) If  $\beta<3\alpha+1$, then, for $n\geq 2$,
\[(n-1)(1-\alpha)\leq \alpha n(n-1) +n-\beta \quad (n\geq 2)\]
and an use of  this inequality shows  that
\[\sum_{n=2}^{\infty} ( n-1) a_n
\leq\sum_{n=2}^{\infty}\frac{\alpha
n^2+(1-\alpha)n-\beta}{1-\alpha}a_n\leq \frac{1-\beta}{1-\alpha}.\]

If  $\beta>3\alpha+1$, then the inequality
\begin{align*}
&(n-1)(2\alpha+2-\beta)\leq \alpha n^2+n(1-\alpha)-\beta\quad (n\geq
2)
\end{align*}
 shows that
\[\sum_{n=2}^{\infty} (n-1)a_n
\leq\sum_{n=2}^{\infty}\frac{\alpha
n^2+(1-\alpha)n-\beta}{2\alpha+2-\beta}a_n\leq
\frac{1-\beta}{2\alpha+2-\beta}.\]

(3) If $\beta<2(1-\alpha)$, the result follows from  inequality
\begin{align*}
\alpha n^2&\leq\alpha n^2+2(1-\alpha)-\beta\leq\alpha
n^2+n(1-\alpha)-\beta.
\end{align*}
Using this inequality, it follows that
\[\sum_{n=2}^{\infty} n^2 a_n\leq\sum_{n=2}^{\infty}
\frac{\alpha n^2+(1-\alpha)n-\beta}{\alpha} a_n\leq
\frac{1-\beta}{\alpha}.\]

In the case $\beta\geq2(1-\alpha)$,  the inequality\[
 n^2(2\alpha+2-\beta)\leq 4(\alpha n^2+(1-\alpha)n-\beta) \quad
 (n\geq 2)\] shows that
\[\sum_{n=2}^{\infty}n^2a_n
\leq\sum_{n=2}^{\infty}\frac{4(\alpha
n^2+(1-\alpha)n-\beta)}{2\alpha+2-\beta}a_n\leq\frac{4(1-\beta)}{2\alpha+2-\beta}.\]

(4) For $\alpha,\beta>0$, the inequality
\[ (2\alpha+2-\beta)n\leq 2\big(\alpha
n^2+(1-\alpha)n-\beta\big)\] shows that
\[\sum_{n=2}^{\infty}na_n\leq
  \sum_{n=2}^{\infty}\frac{2\big(\alpha n^2+(1-\alpha)n-\beta\big)}{2\alpha+2-\beta}a_n
  \leq\frac{2(1-\beta)}{2\alpha+2-\beta}.\]

The  sharpness can be seen by  considering  the function $f_0$ given
by
   \[f(z)=z-\frac{1-\beta}{2\alpha+2-\beta}z^2\in\mathcal{TR}(\alpha,\beta).\qedhere\]
\end{proof}

%In the case of $c=-k$, $k=0,1,2,\ldots$, $F(a,b;c;z)$ is defined  as
%a polynomial of degree $j$ in $z$ if $a\neq-j$ or $b=-j$ where
%$j\leq k$.

\section{Applications to Gaussian  hypergeometric functions}
\noindent For  $a,b,c\in \mathbb{C}$  with $c\neq0,-1,-2,\dotsc$,
the \emph{Gaussian hypergeometric function} is defined by
\[F(a,b;c;z):=\sum_{n=0}^{\infty}\frac{(a)_n(b)_n}{(c)_n(1)_n}z^n=1+\frac{ab}{c}\frac{z}{1!}
+\frac{a(a+1)b(b+1)}{c(c+1)}\frac{z^2}{2!}+\cdots,\] where
$(\lambda)_n$ is Pochhammer symbol defined, in terms of the Gamma
function, by
\[(\lambda)_n=\frac{\Gamma(\lambda+n)}{\Gamma(\lambda)}\quad (
n=0,1,2,\dotsc).\]The series converges absolutely on $\mathbb{D}$.
It also converges  on $|z|=1$ when $\RE(c-a-b)>0$. For
$\RE(c-a-b)>0$, the value of the hypergeometric function
$F(a,b;c;z)$ at $z=1$ is related to Gamma function by the following
Gauss summation formula
\begin{equation}\label{Fat1}F(a,b;c;1)
=\frac{\Gamma(c)\Gamma(c-a-b)}{\Gamma(c-a)\Gamma(c-b)}\quad
(c\neq0,-1,-2,\ldots).\end{equation}

By making use of Theorem~\ref{suffi}, Silverman \cite{silver2}
determined conditions on $a,b,c$ so that  the function $zF(a,b;c;z)$
belongs to certain subclasses of the starlike  and convex functions.
In the following theorem, the conditions on the parameters $a,b,c$
are determined so that the function $zF(a,b;c;z)$ belongs to the
class $\mathcal{R}(\alpha,\beta)$. For other classes investigated in
this paper, similar results are true but the details are omitted
here. The proof  follows directly by the corresponding theorems in
the previous sections, the Gauss summation formula for the Gaussian
hypergeometric functions and some algebraic manipulation; the
details of the proofs are omitted as they are similar to those of
Silverman \cite{silver2} and Kim and Ponnusamy \cite{kim}.

\begin{theorem}
Let  $a,b\in\mathbb{C}$ and $c\in\mathbb{R}$ satisfy  either
\[ F(|a|,|b|;c;1)\left(\frac{(|a|)_2(|b|)_2}{(c-|a|-|b|-2)_2}+\frac{2|ab|}{c-|a|-|b|-1}\right)
\leq\frac{2(1-\beta)}{2\alpha+2-\beta},\] for $ c>|a|+|b|+2$,
$\alpha\geq0$, $\beta<1$, or
\[ F(|a|,|b|;c;1)\left(\frac{(|a|)_2(|b|)_2}{(c-|a|-|b|-2)_2}+\frac{3|ab|}{c-|a|-|b|-1}+1\right)
\leq\frac{6-5\beta+2\alpha}{2\alpha+2-\beta},\] for $c>|a|+|b|+2$,
$1-\alpha\geq\beta$, $\alpha\in[0,1)$, then  the function
$zF(a,b;c;z)\in\mathcal{R}(\alpha, \beta)$. In the case
$b=\overline{a}$, the range of $c$ in either case can be improved to
$c>\max\{0,2(1+\RE a)\}$.
\end{theorem}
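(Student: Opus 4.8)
The plan is to realize $g(z):=zF(a,b;c;z)=z+\sum_{n=2}^{\infty}a_nz^n$, with $a_n=\frac{(a)_{n-1}(b)_{n-1}}{(c)_{n-1}(1)_{n-1}}$, as a function in $\mathcal{A}$ (it lies in $\mathcal{A}$ since the series converges absolutely on $\mathbb{D}$, $g(0)=0$ and $g'(0)=F(a,b;c;0)=1$), to bound the two coefficient sums $\sum_{n\ge 2}n(n-1)|a_n|$ and $\sum_{n\ge2}n^2|a_n|$ in closed form, and then to apply Theorem~\ref{th7}(1) to the first hypothesis and Theorem~\ref{th7}(2) (in the branch $\alpha+\beta\le 1$) to the second. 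Thus the whole argument splits into (i) a reduction to a coefficient estimate, supplied by Theorem~\ref{th7}, and (ii) the evaluation of that estimate via Pochhammer shifting and the Gauss summation formula \eqref{Fat1}.

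For step (ii): since $(c)_{n-1}(1)_{n-1}>0$ when $c>0$ and $|(a)_{n-1}|\le(|a|)_{n-1}$, $|(b)_{n-1}|\le(|b|)_{n-1}$, one has $|a_n|\le A_n:=\frac{(|a|)_{n-1}(|b|)_{n-1}}{(c)_{n-1}(n-1)!}$. Put $m=n-1$ and use the splittings $n(n-1)=m(m-1)+2m$ and $n^2=(m+1)^2=m(m-1)+3m+1$, together with $\frac{m(m-1)}{m!}=\frac{1}{(m-2)!}$ and $\frac{m}{m!}=\frac{1}{(m-1)!}$. Each resulting series has the shape $\sum_{k\ge 0}\frac{(|a|)_{k+j}(|b|)_{k+j}}{(c)_{k+j}\,k!}$ for $j\in\{0,1,2\}$, and after factoring $(|a|)_{k+j}=(|a|)_j(|a|+j)_k$ (and similarly for $(|b|)_{k+j}$, $(c)_{k+j}$) it equals $\frac{(|a|)_j(|b|)_j}{(c)_j}F(|a|+j,|b|+j;c+j;1)$; the Gauss formula \eqref{Fat1} then collapses the $\Gamma$-quotient to give, for $j=1,2$,
\[
\frac{(|a|)_j(|b|)_j}{(c)_j}\,F(|a|+j,|b|+j;c+j;1)=F(|a|,|b|;c;1)\,\frac{(|a|)_j(|b|)_j}{(c-|a|-|b|-j)_j},
\]
with convergence dictated by $\RE\big((c+j)-(|a|+j)-(|b|+j)\big)>0$, i.e. by $c>|a|+|b|+2$ in the worst case $j=2$. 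Assembling the pieces (and using $\sum_{m\ge1}\frac{(|a|)_m(|b|)_m}{(c)_m\,m!}=F(|a|,|b|;c;1)-1$ for the $j=0$ term of the second sum) gives
\[
\sum_{n=2}^{\infty}n(n-1)A_n=F(|a|,|b|;c;1)\left(\frac{(|a|)_2(|b|)_2}{(c-|a|-|b|-2)_2}+\frac{2|ab|}{c-|a|-|b|-1}\right),
\]
\[
\sum_{n=2}^{\infty}n^2A_n=F(|a|,|b|;c;1)\left(\frac{(|a|)_2(|b|)_2}{(c-|a|-|b|-2)_2}+\frac{3|ab|}{c-|a|-|b|-1}+1\right)-1.
\]

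With these identities in hand the theorem is immediate. Under the first hypothesis the first identity gives $\sum n(n-1)|a_n|\le\sum n(n-1)A_n\le\frac{2(1-\beta)}{2\alpha+2-\beta}$, so Theorem~\ref{th7}(1) (valid for $\beta<1$) places $g\in\mathcal{R}(\alpha,\beta)$. Under the second hypothesis, subtracting $1$ from $F(|a|,|b|;c;1)(\cdots)\le\frac{6-5\beta+2\alpha}{2\alpha+2-\beta}=1+\frac{4(1-\beta)}{2\alpha+2-\beta}$ and inserting the second identity gives $\sum n^2|a_n|\le\frac{4(1-\beta)}{2\alpha+2-\beta}$; since $\alpha\in[0,1)$ and $1-\alpha\ge\beta$ (so $\alpha\le1$ and $\alpha+\beta\le1$), Theorem~\ref{th7}(2) again yields $g\in\mathcal{R}(\alpha,\beta)$. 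Finally, when $b=\overline a$ the coefficients are $a_n=\frac{|(a)_{n-1}|^2}{(c)_{n-1}(n-1)!}\ge0$, so $|a_n|=A_n$ exactly and the preceding estimates lose nothing; now the only constraints are convergence of $F(a+2,\overline a+2;c+2;1)$ (and the intermediate series), which holds once $c>2(1+\RE a)$, together with $c>0$ to keep the Pochhammer symbols positive — hence the improved range $c>\max\{0,2(1+\RE a)\}$.

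The only real labor is the bookkeeping in the middle paragraph: identifying which shifted hypergeometric series arise from each of the two coefficient polynomials, invoking \eqref{Fat1} with the correctly shifted parameters, and checking that the $\Gamma$-quotients telescope to the stated rational factors. I do not expect any genuine obstacle beyond this routine computation; everything structural is handed to us by Theorem~\ref{th7}, which is precisely why the paper omits the details.
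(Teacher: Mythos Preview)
Your approach is exactly what the paper intends: it explicitly says the proof follows from Theorem~\ref{th7}, the Gauss summation formula~\eqref{Fat1}, and routine manipulation, and omits the details. Your reduction via Theorem~\ref{th7}(1) and~(2) and the Pochhammer-shifting computation of $\sum n(n-1)A_n$ and $\sum n^2A_n$ are correct and match the indicated route.

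One slip to flag in the $b=\overline{a}$ paragraph: the claim ``$|a_n|=A_n$ exactly'' is not right as stated, since $A_n$ was defined with $(|a|)_{n-1}(|b|)_{n-1}$, whereas $|a_n|=\dfrac{|(a)_{n-1}|^2}{(c)_{n-1}(n-1)!}$, and in general $|(a)_{n-1}|<(|a|)_{n-1}$ for non-real $a$. What you evidently mean (and what your reference to $F(a+2,\overline a+2;c+2;1)$ confirms) is that when $b=\overline a$ the coefficients $a_n$ are nonnegative, so no triangle inequality is needed and one reruns the identical computation with the actual parameters $a,\overline a$ in place of $|a|,|b|$. The shifted Gauss sums then converge once $c>2\RE a+2=2(1+\RE a)$ and $c>0$, giving the stated improved range. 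Rewriting that sentence accordingly removes the only imprecision.
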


\end{document}